\newtheorem{Theorem}{Theorem}
\newtheorem{Lemma}{Lemma}
\theoremstyle{remark}
\theoremstyle{definition}
\def\BState{\State\hskip-\ALG@thistlm}
\title{Italian domination in generalized Petersen graphs}
\author{Hong Gao$^{1}$ \and  Jiahuan Huang$^{1}$ \and Yanan Yin$^{1}$ \and Yuansheng Yang$^{2}$}
\affiliation{$^{1}$ College of Science, Dalian Maritime University, Dalian, China\\
$^{2}$School of Computer Science and Technology, Dalian University of Technology, Dalian, China}
\keywords{generalized Petersen graph, Roman domination, Italian domination, rainbow domination}
\begin{document}
\publicationdetails{XX}{2020}{X}{X}{XXXX}

\maketitle
\begin{abstract}
In a graph $G=(V,E)$, each vertex $v\in V$ is labelled with $0$, $1$ or $2$ such that each vertex labelled with $0$ is adjacent to at least one vertex labelled $2$ or two vertices labelled $1$. Such kind of labelling is called an Italian dominating function (IDF) of $G$. The weight of an IDF $f$ is $w(f)=\sum_{v\in V}f(v)$. The Italian domination number of $G$ is $\gamma_{I}(G)=\min_{f} w(f)$. Gao et al. (2019) have determined the value of $\gamma_I(P(n,3))$. In this article, we focus on the study of the Italian domination number of generalized Petersen graphs $P(n, k)$, $k\neq3$. We determine the values of $\gamma_I(P(n, 1))$, $\gamma_I(P(n, 2))$ and $\gamma_I(P(n, k))$ for $k\ge4$, $k\equiv2,3(\bmod5)$ and $n\equiv0(\bmod5)$. For other $P(n,k)$, we present a bound of $\gamma_I(P(n, k))$. With the obtained results, we partially solve the open problem presented by Bre\v{s}ar et al. (2007) by giving $P(n,1)$ is an example for which $\gamma_I=\gamma_{r2}$ and characterizing $P(n,2)$ for which $\gamma_I(P(n,2))=\gamma_{r2}(P(n,2))$. Moreover, our results imply $P(n,1)$ $(n\equiv0(\bmod\ 4))$ is Italian, $P(n,1)$ $(n\not\equiv0(\bmod\ 4))$ and $P(n,2)$ are not Italian.
\end{abstract}

\section{Introduction and notations}
We first introduce some terminologies and symbols in this paper. $G=(V,E)$ is a graph with vertex set $V$ and edge set $E$. $N(v)$ the \emph{open neighborhood} of $v\in V$ is the set of vertices which are adjacent to $v$, i.e. $N(v)=\{u|(u,v)\in E(G)\}$. $N[v]=N(v)\cup\{v\}$ is the \emph{close neighborhood} of $v$. The number of vertex in $N(v)$ is the \emph{degree} of $v$, denoted as $\deg(v)=|N(v)|$. $\Delta(G)$/$\delta(G)$ is the \emph{maximum/minimum degree} of $G$. If $\deg(v)=r$ for every $v\in V$, then $G$ is $r-$regular.

A \emph{dominating set} of $G$ is a set $D\subseteq V(G)$ and $N[D]=V(G)$.
The \emph{domination number} of $G$ is the minimum cardinality of dominating sets, denoted as $\gamma(G)$.
Domination has many variants. Roman domination \cite{Cockayne2004Roman} and rainbow domination \cite{Bresar2008rainbow} are two attractive ones.

Let $f: V\rightarrow\{0,1,2\}$ be a function on $G = (V, E)$. If for every vertex $v\in V$ with $f(v)=0$ there are at least one vertex $u$ with $f(u)=2$ in $N(v)$,     then $f$ is called a \emph{Roman dominating function} (RDF).

Let $f: V\rightarrow \{\emptyset, \{1\}, \{2\}, \{1,2\}\}$ be a function on $G = (V, E)$. If for every vertex $v\in V$ with $f(v)=\emptyset$ it holds $\cup_{u\in N(v)}f(u)=\{1,2\}$, then $f$ is called \emph{$2$-rainbow dominating function} ($2$RDF). The weight of $f$ is $w(f)=\sum_{v\in V}|f(v)|$. The \emph{2-rainbow domination number} $\gamma_{r2}(G)=\min_{f} w(f)$.

Bre\v{s}ar et al. \cite{Bresar2008rainbow} studied the 2-rainbow domination number of trees and introduced the concept of weak $\{2\}$-dominating function (W2DF).
In \cite{Chellali2016roman-2}, Chellali et al. initiated the study of Roman $\{2\}$-domination which is a generalization of Roman domination. Essentially, the Roman $\{2\}$-dominating function is the same as W2DF. This new domination is named Italian domination by Henning et al.\cite{Henning2017Italiantree}.

Let $f: V\rightarrow\{0,1,2\}$ be a function on $G = (V, E)$. If for every vertex $v\in V$ with $f(v)=0$, it holds $\sum_{u\in N(v)}f(u)\ge2$, then $f$ is called an \emph{Italian dominating function} (IDF) of $G$. The weight of $f$ is $w(f)=\sum_{v\in V}f(v)$. The \emph{Italian domination number} of $G$ is $\gamma_{I}(G)=\min_{f} w(f)$. An IDF $f$ is a $\gamma_{I}$-function if it's weight $w(f)=\gamma_{I}(G)$. If $\gamma_{I}(G)$ is equal to $2\gamma(G)$, then $G$ is called an \emph{Italian graph}.

Scholars are interested in the relationship between $\gamma_{I}(G)$ and $\gamma_{r2}(G)$, the relationship between $\gamma_{I}(G)$ and $\gamma(G)$ as well as in finding the value of $\gamma_{I}(G)$. Chellali et al. \cite{Chellali2016roman-2} show $\gamma_{I}(G)\leq \gamma_{r2}(G)$ for a graph $G$.
Bre\v{s}ar et al. \cite{Bresar20072rainbowdomination} present an open problem that is for which classes of graphs $\gamma_{r2}=\gamma_{I}$. And they prove for trees $\gamma_{I}(T)=\gamma_{r2}(T)$ \cite{Bresar2008rainbow}.
St\c{e}pie\'{n} et al. \cite{Stepien20152rainbowcnc5} show $\gamma_{I}(C_{n}\Box C_5)=\gamma_{r2}(C_{n}\Box C_5)$.
If $\gamma_{I}(G)=2\gamma(G)$, then $G$ is called an Italian graph. Stars and double stars are Italian graphs \cite{Henning2017Italiantree}.
More studies on finding the Italian domination number, please refer to \cite{Li2018weak-2, Gao2019ItalianCnPm}. The value of $\gamma_{I}(P(n,3))$ is determined by Gao et al. \cite{Gao2019pn3}.
There are many studies \cite{Hao2018globalItalian, Rahmouni, FanW, Haynes2019PerfectItalian} are related to Italian domination.

In this article, we determine the values of $\gamma_{I}(P(n,1))$, $\gamma_{I}(P(n,2))$ and $\gamma_{I}(P(n,k))$ for $k\ge 4$, $k\equiv2,3(\bmod5)$ and $n\equiv0(\bmod5)$.
We give $P(n,1)$ is an example for which $\gamma_I=\gamma_{r2}$ and we characterize $P(n,2)$ for which $\gamma_I(P(n,2))=\gamma_{r2}(P(n,2))$. Our results imply $P(n,1)$ $(n\equiv0(\bmod\ 4))$ is Italian, $P(n,1)$ $(n\not\equiv0(\bmod\ 4))$ and $P(n,2)$ are not Italian.

\section{The Italian domination number of $P(n,1)$}
Let $G$ be $P(n,k)$, we use the notation $f$ to define an Italian domination function on $G$,
\begin{eqnarray*}
f(V(G))= \left(\begin{array}{lllllll}
f(v_0)  & f(v_2)    & f(v_4)    & \cdots& f(v_{2n-2})\\
f(v_1)  & f(v_3)    & f(v_5)    & \cdots& f(v_{2n-1})\\
\end{array}\right).
\end{eqnarray*}

Figure \ref{fig:pnk} $(a)$ shows the graph $P(6,2)$ and $(b)$ shows $P(6,2)$ under $f$, in which red vertices represent $f(v)=1$, blue vertice represent $f(v)=0$.
\begin{figure}[htbp]
\centering
\includegraphics[scale=1.0]{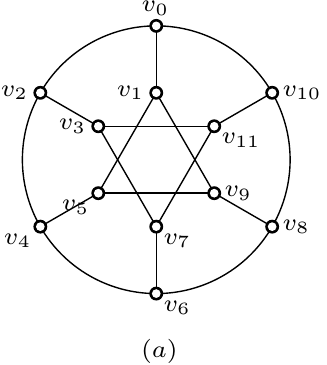}
\includegraphics[scale=1.0]{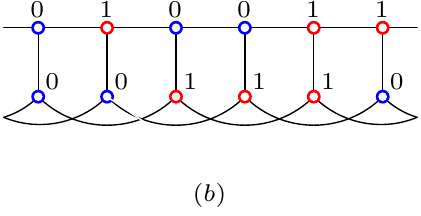}
\caption{$(a)$ Graph $P(6,2)$. $(b)$ $P(6,2)$ under $f$.}
\label{fig:pnk}
\end{figure}

\begin{Theorem}
$\gamma_{I}(P(n,1))\leq n$.
\label{thm:lowerboundpn1}
\end{Theorem}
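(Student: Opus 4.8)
The plan is to exhibit an explicit Italian dominating function of weight $n$ and verify it directly. Recall that in the $v$-indexing, $P(n,1)$ consists of the outer cycle $v_0v_2v_4\cdots v_{2n-2}v_0$, the inner cycle $v_1v_3v_5\cdots v_{2n-1}v_1$, and the spokes $v_{2i}v_{2i+1}$ for $0\le i\le n-1$; call the pair $(v_{2i},v_{2i+1})$ \emph{column} $i$. I would define $f$ by giving column $i$ the values $f(v_{2i})=1,\ f(v_{2i+1})=0$ when $i$ is even and $f(v_{2i})=0,\ f(v_{2i+1})=1$ when $i$ is odd (in the matrix notation of the paper, the columns simply alternate between the patterns $\left(\begin{smallmatrix}1\\0\end{smallmatrix}\right)$ and $\left(\begin{smallmatrix}0\\1\end{smallmatrix}\right)$). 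Each column contributes exactly $1$, so $w(f)=n$, and it remains only to check the Italian condition.

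The vertices with $f=0$ are precisely the inner vertices of even columns and the outer vertices of odd columns, so only these need to be tested. First take an outer vertex $v_{2i}$ with $i$ odd: its neighbours are its spoke partner $v_{2i+1}$ (value $1$, since $i$ is odd) together with the two outer vertices in columns $i-1$ and $i+1$; since $i$ is odd, both of these columns are even (this holds for every odd column, regardless of the parity of $n$, because the wrap-around edge of the column index is between columns $n-1$ and $0$ and never involves an odd column), so each contributes $1$ and the neighbour sum is $3\ge 2$. Next take an inner vertex $v_{2i+1}$ with $i$ even: its neighbours are its spoke partner $v_{2i}$ (value $1$, since $i$ is even) together with the two inner vertices in columns $i-1$ and $i+1$, so it suffices that at least one of those columns is odd.

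The only place this could fail is the seam between column $n-1$ and column $0$. If $n$ is even, these two columns have opposite parities, so the pattern is a genuine alternating $2$-colouring of the columns; then every even column has both neighbouring columns odd and the neighbour sum of each inner $0$-vertex is $3$. If $n$ is odd, columns $0$ and $n-1$ are both even, but column $0$ still has column $1$ (odd) as a neighbour and column $n-1$ still has column $n-2$ (odd) as a neighbour, so the inner $0$-vertex of each of these two columns still picks up a $1$ from an odd neighbouring column and its neighbour sum is exactly $2$. Hence in every case each vertex with $f=0$ has neighbour sum $\ge 2$, so $f$ is an IDF and $\gamma_I(P(n,1))\le w(f)=n$. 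The only mildly delicate point — and the one I would check most carefully — is this parity bookkeeping at the seam for odd $n$; everything else is immediate from the prism structure.
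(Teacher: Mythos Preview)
Your construction is exactly the one the paper gives (alternating columns $\left(\begin{smallmatrix}1\\0\end{smallmatrix}\right)$ and $\left(\begin{smallmatrix}0\\1\end{smallmatrix}\right)$, with an extra $\left(\begin{smallmatrix}1\\0\end{smallmatrix}\right)$ column when $n$ is odd), and your verification of the Italian condition, including the seam analysis, is correct. The paper itself simply exhibits $f$ and states $w(f)=n$ without spelling out the check, so your write-up is in fact more detailed than the original.
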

\begin{proof}
Let
\begin{equation*}
f=\left\{
\begin{array}{llll}
{\left(\begin{array}{lllll}
1 &0 \\
0 &1 \\
\end{array}\right)}^\frac{n}{2}       &n\equiv0(\bmod\ 2),\\
{\left(\begin{array}{lllll}
1 &0 \\
0 &1 \\
\end{array}\right)}^\frac{n-1}{2}{\left(\begin{array}{lllll}
1  \\
0  \\
\end{array}\right)}       &n\equiv1(\bmod\ 2),\\
\end{array}\right.
\end{equation*}
\noindent where $\frac{n}{2}(\frac{n-1}{2})$ means we repeat the two columns $\frac{n}{2}(\frac{n-1}{2})$ times. Then
$w(f)=n$, thus $\gamma_{I}(P(n,1))\leq n$.
\end{proof}

Next we will prove $\gamma_{I}(P(n,1))\ge n$. Let $f$ be an IDF on $P(n,1)$, we denote $V^{i}=\{v_{2i}, v_{2i+1}\}$, $w(f_i)=f(v_{2i})+f(v_{2i+1})$ $(0\leq i\leq n-1)$.

\begin{Lemma}
If $w(f_i)=0$ $(0\leq i\leq n-1)$, then $w(f_{i-1})+w(f_{i+1})\ge 4$, where subscripts are read modulo 2n.
\label{lem:44444}
\end{Lemma}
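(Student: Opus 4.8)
The plan is to analyze the structure of $P(n,1)$ locally around a ``block'' $V^i$ with $w(f_i)=0$. Recall that in $P(n,1)$, the outer cycle vertex $v_{2i}$ is adjacent to $v_{2i-2}$, $v_{2i+2}$, and the spoke vertex $v_{2i+1}$; the inner vertex $v_{2i+1}$ is adjacent to $v_{2i-1}$, $v_{2i+3}$, and $v_{2i}$. So when $f(v_{2i})=f(v_{2i+1})=0$, the Italian condition applied to $v_{2i}$ forces $f(v_{2i-2})+f(v_{2i+2})+f(v_{2i+1})\ge 2$, i.e.\ $f(v_{2i-2})+f(v_{2i+2})\ge 2$; similarly the condition at $v_{2i+1}$ gives $f(v_{2i-1})+f(v_{2i+3})\ge 2$. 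Adding these two inequalities yields exactly $w(f_{i-1})+w(f_{i+1})\ge 4$, which is the claim.

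First I would set up the notation for the six relevant vertices $v_{2i-2},v_{2i-1},v_{2i},v_{2i+1},v_{2i+2},v_{2i+3}$ and record their adjacencies in $P(n,1)$ (being careful that indices are modulo $2n$ so the statement also covers the wrap-around cases). Second, I would write down the Italian domination inequality at $v_{2i}$: since $f(v_{2i})=0$, we need $\sum_{u\in N(v_{2i})}f(u)\ge 2$, and $N(v_{2i})=\{v_{2i-2},v_{2i+2},v_{2i+1}\}$, and because $f(v_{2i+1})=0$ this reduces to $f(v_{2i-2})+f(v_{2i+2})\ge 2$. Third, the symmetric argument at $v_{2i+1}$ (whose neighborhood is $\{v_{2i-1},v_{2i+3},v_{2i}\}$, with $f(v_{2i})=0$) gives $f(v_{2i-1})+f(v_{2i+3})\ge 2$. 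Finally, summing the two inequalities and regrouping as $\bigl(f(v_{2i-2})+f(v_{2i-1})\bigr)+\bigl(f(v_{2i+2})+f(v_{2i+3})\bigr)=w(f_{i-1})+w(f_{i+1})\ge 4$ completes the proof.

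There is essentially no serious obstacle here; the only thing to be careful about is the modular arithmetic on the subscripts (so that $i=0$ and $i=n-1$ are handled by the same computation, using $v_{-1}=v_{2n-1}$, $v_{-2}=v_{2n-2}$, $v_{2n}=v_0$, $v_{2n+1}=v_1$), and the fact that we are implicitly using that $f$ takes nonnegative values so that dropping the term $f(v_{2i+1})=0$ or $f(v_{2i})=0$ is harmless. I would state these conventions once at the start and then the three-line computation finishes it.
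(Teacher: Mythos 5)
Your proposal is correct and follows essentially the same argument as the paper: apply the Italian domination condition at the two zero-labelled vertices $v_{2i}$ and $v_{2i+1}$, drop the zero spoke terms, and add the resulting inequalities $f(v_{2i-2})+f(v_{2i+2})\ge 2$ and $f(v_{2i-1})+f(v_{2i+3})\ge 2$ to get $w(f_{i-1})+w(f_{i+1})\ge 4$. The extra care you note about indices modulo $2n$ and nonnegativity of $f$ is implicit in the paper's proof and adds nothing substantively different.
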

\begin{proof}
$w(f_i)=0$, i.e. $f(v_{2i})=f(v_{2i+1})=0$, by the definition of IDF, $f(v_{2i-2})+f(v_{2i+2})\ge 2$ and $f(v_{2i-1})+f(v_{2i+3})\ge 2$, i.e. $f(v_{2i-2})+f(v_{2i-1})+f(v_{2i+2})+f(v_{2i+3})=w(f_{i-1})+w(f_{i+1})\ge 4$.
\end{proof}

\begin{Theorem}
$\gamma_{I}(P(n,1)) \geq n.$
\label{thm:upperboundpn1}
\end{Theorem}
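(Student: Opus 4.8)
The plan is to show that every IDF $f$ on $P(n,1)$ has weight at least $n$ by a discharging/averaging argument over the $n$ consecutive ``columns'' $V^0, V^1, \dots, V^{n-1}$, where $V^i = \{v_{2i}, v_{2i+1}\}$ and $w(f_i) = f(v_{2i}) + f(v_{2i+1})$. Since $w(f) = \sum_{i=0}^{n-1} w(f_i)$, it suffices to prove that the average value of $w(f_i)$ is at least $1$, i.e. $\sum_i w(f_i) \ge n$. The only obstruction to this is the presence of ``empty'' columns with $w(f_i)=0$, and \autoref{lem:44444} is exactly the tool to handle them: an empty column forces its two cyclic neighbors to contribute a total of at least $4$.

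The key steps, in order. First, I would observe that by \autoref{lem:44444}, two empty columns can never be adjacent: if $w(f_i)=w(f_{i+1})=0$, then applying the lemma to index $i$ gives $w(f_{i-1})+w(f_{i+1})\ge 4$, forcing $w(f_{i+1})\ge 4$, a contradiction. Hence the empty columns form an ``independent set'' in the cycle $C_n$ on indices $0,\dots,n-1$. Second, I would set up a charge-redistribution scheme: give each column $V^i$ its charge $w(f_i)$, and have every nonempty column send charge to adjacent empty columns. Concretely, for each empty column $V^i$ (with necessarily nonempty neighbors $V^{i-1}, V^{i+1}$ whose weights sum to $\ge 4$), I would transfer charge from $\{V^{i-1}, V^{i+1}\}$ to $V^i$ so that afterwards $V^i$ has charge $\ge 1$ while each of $V^{i-1}, V^{i+1}$ retains charge $\ge 1$. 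This is feasible column-by-column in the ``isolated empty column'' case: from the pair summing to $\ge 4$, subtract $1$ from the empty column's budget of $0$ (needing $+1$) and from $\ge 4$ remove $\le 2$ total to leave $\ge 1$ on each side, but one must be careful when a nonempty column is adjacent to an empty column on each side — then it may need to donate twice. Third, I would verify the global count: since total charge is invariant under redistribution and equals $w(f)$, and since after redistribution every column carries charge $\ge 1$, we get $w(f) \ge n$, hence $\gamma_I(P(n,1)) \ge n$.

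The main obstacle is the bookkeeping in the third step when a nonempty column $V^j$ sits between two empty columns $V^{j-1}$ and $V^{j+1}$. Then \autoref{lem:44444} applied at $j-1$ gives $w(f_{j-2})+w(f_j)\ge 4$ and applied at $j+1$ gives $w(f_j)+w(f_{j+2})\ge 4$, but these do not immediately force $w(f_j)$ alone to be large enough to feed both neighbors. The clean way around this is to argue more structurally: partition the index cycle into maximal blocks of consecutive nonempty columns separated by single empty columns, and for each such configuration ``empty, nonempty$^{\,t}$, empty'' (a nonempty run of length $t\ge 1$ flanked by empties), show that the $t$ nonempty columns plus the flanking structure already contribute enough. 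Actually the cleanest approach is to pair each empty column $V^i$ with one dedicated neighbor (say $V^{i+1}$) and note that by \autoref{lem:44444} the unpaired neighbor $V^{i-1}$ satisfies $w(f_{i-1}) \ge 4 - w(f_{i+1})$; summing $w(f_{i-1}) + w(f_i) + w(f_{i+1}) \ge 4$ over the empty columns (whose closed neighborhoods in $C_n$ are disjoint, since empties are non-adjacent) covers all columns lying in such neighborhoods with average $\ge 4/3 > 1$, while every column not adjacent to any empty column is itself nonempty with weight $\ge 1$; combining gives $w(f) \ge n$. I would fill in the short verification that these two classes of columns are disjoint and exhaust all $n$ columns, completing the proof.

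Combining \autoref{thm:lowerboundpn1} and this lower bound yields $\gamma_I(P(n,1)) = n$.
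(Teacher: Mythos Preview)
Your overall strategy---reduce to columns, use \autoref{lem:44444} to show empty columns are isolated in $C_n$, then average---is sound and close in spirit to the paper's bagging argument. But your ``cleanest approach'' contains a false step: you assert that the closed neighborhoods (in $C_n$) of the empty columns are pairwise disjoint ``since empties are non-adjacent.'' Non-adjacency only rules out empties at distance $1$; it does \emph{not} prevent two empty columns at distance $2$. Concretely, the pattern $w(f_{i-1})=2$, $w(f_i)=0$, $w(f_{i+1})=2$, $w(f_{i+2})=0$, $w(f_{i+3})=2$ is perfectly consistent with \autoref{lem:44444} and with the IDF condition, yet $V^{i+1}$ lies in the closed neighborhood of both $V^i$ and $V^{i+2}$. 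In that situation you would be double-counting $w(f_{i+1})$ when you ``sum $w(f_{i-1})+w(f_i)+w(f_{i+1})\ge 4$ over the empty columns,'' and the average-$\ge 4/3$ conclusion collapses. You flagged exactly this obstacle two sentences earlier, but then reintroduced it.

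The paper's proof is precisely a repair of this overlap issue. Its bagging steps partition the columns so that each empty column is grouped with one or two neighbors in a way that never reuses a column: Step~2 absorbs the troublesome pattern $0,\ \ge 3,\ 0$ into a single triple of total weight $\ge 3$; Steps~1, 3, 4 pair each remaining empty column with a single dedicated neighbor of weight $\ge 2$ (or $\ge 3$); and the bookkeeping array $D[\cdot]$ guarantees disjointness. Your earlier ``partition into maximal nonempty runs'' idea can be made to work and is essentially equivalent, but it needs an explicit case split on the length of the run (in particular length $1$, where a single column of weight $\ge 2$ must serve two flanking empties, versus length $\ge 2$, where each boundary column feeds one empty)---exactly the case analysis you declined to carry out. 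As written, the proposal has a genuine gap at the disjointness claim.
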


\begin{proof}
Let $G$ be $P(n,1)$, by bagging approach \cite{Gao2019ItalianCnPm}, specifically, we use the following steps to put vertices into five bags $B_{1},\cdots, B_{5}$.

Initialization: $m_1=m_2=m_3=m_4=m_5=0$, $B_{1}=B_{2}=B_{3}=B_{4}=B_{5}=\emptyset$,
$D[i]=0$ for $i=0$ up to $n-1$.

Step 1. For every $i$ with $w(f_{i})=0\wedge w(f_{i+1})=2$ do
$m_1=m_1+1$, $D[i]=D[i+1]=1$, $B_{1}=B_{1}\cup V^{i}\cup V^{i+1}$.

Step 2. For every $i$ with $w(f_{i})=0\wedge w(f_{i+1})\geq3\wedge w(f_{i+2})=0\wedge D[i+2]=0$
do $m_2=m_2+1$, $D[i]=D[i+1]=D[i+2]=1$, $B_{2}=B_{2}\cup V^{i}\cup V^{i+1}\cup V^{i+2}$.

Step 3. For every $i$ with $w(f_{i})=0\wedge w(f_{i+1})\geq3\wedge (w(f_{i+2})\ge1\vee D[i+2]=1)$
do $m_3=m_3+1$, $D[i]=D[i+1]=1$, $B_{3}=B_{3}\cup V^{i}\cup V^{i+1}$.

Step 4. For every $i$ with $w(f_{i})=0\wedge w(f_{i+1})\leq1$, by Lemma \ref{lem:44444}, $w(f_{i-1})\ge3$ do
$m_4=m_4+1$, $D[i-1]=D[i]=1$, $B_{4}=B_{4}\cup V^{i-1}\cup V^{i}$.

By now for every $i$ with $D[i]=0$, $w(f_{i})>0$.

Let $B_{5}=V(G)-B_{1}-B_{2}-B_{3}-B_{4}$, $m_5=\frac{|B_{5}|}{2}$.

Since $2m_1+3m_2+2m_3+2m_4+m_5=n$,
\begin{equation*}
\begin{split}
f(V(G))&\geq m_1\times 2+m_2\times 3+m_3\times 3+m_4\times 3+m_5\\
&=2m_1+3m_2+3m_3+3m_4+m_5\\
&\geq 2m_1+3m_2+2m_3+2m_4+m_5\\
&=n.
\end{split}
\end{equation*}
Thus, $\gamma_{I}(G) \geq n.$
\end{proof}

By Theorem \ref{thm:lowerboundpn1} and Theorem \ref{thm:upperboundpn1}, we obtain the value of $\gamma_{I}(P(n,1))$.
\begin{Theorem}
For any integer $n\geq3$, $\gamma_{I}(P(n,1)) =n$.
\label{thm:Italianpn1}
\end{Theorem}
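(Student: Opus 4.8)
The plan is to combine the two bounds already established in the excerpt. Specifically, Theorem~\ref{thm:lowerboundpn1} gives $\gamma_I(P(n,1))\le n$ by exhibiting an explicit IDF of weight $n$, and Theorem~\ref{thm:upperboundpn1} gives $\gamma_I(P(n,1))\ge n$ via the bagging argument. Putting these together immediately yields $\gamma_I(P(n,1))=n$ for all $n\ge 3$. So the proof is a one-line sandwich argument and the only real content is checking that the hypotheses of the two cited theorems are met for every $n\ge 3$.

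First I would simply invoke Theorem~\ref{thm:lowerboundpn1} to record $\gamma_I(P(n,1))\le n$, noting that the displayed function $f$ there is defined for both parities of $n$, so no case distinction is needed at this stage. Then I would invoke Theorem~\ref{thm:upperboundpn1} for the reverse inequality $\gamma_I(P(n,1))\ge n$. One should make sure that the bagging procedure in that proof is valid for the whole range $n\ge 3$; in particular the reductions in Steps~1--4 and the final count $2m_1+3m_2+2m_3+2m_4+m_5=n$ do not assume anything beyond $P(n,1)$ being $3$-regular with $2n$ vertices, which holds for $n\ge 3$. Combining the two inequalities gives equality.

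I do not expect any genuine obstacle here: both directions are already proved in the excerpt, so the ``hard part'' was really the lower bound via the bagging approach in Theorem~\ref{thm:upperboundpn1}, which is assumed. The only thing to be careful about is stylistic — stating clearly that $n\ge 3$ is exactly the range in which $P(n,1)$ is defined as a simple graph, so that the statement is not vacuous or ill-posed at small $n$. The proof I would write is therefore essentially: ``By Theorem~\ref{thm:lowerboundpn1}, $\gamma_I(P(n,1))\le n$. By Theorem~\ref{thm:upperboundpn1}, $\gamma_I(P(n,1))\ge n$. Hence $\gamma_I(P(n,1))=n$.''
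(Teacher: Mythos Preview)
Your proposal is correct and is essentially identical to the paper's own argument: the paper also derives Theorem~\ref{thm:Italianpn1} simply by combining the upper bound of Theorem~\ref{thm:lowerboundpn1} with the lower bound of Theorem~\ref{thm:upperboundpn1}. There is nothing more to add.
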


\begin{Theorem}\emph{(\cite{shao20192rainbowpnk})}
If $n\geq5$, we have $\gamma_{r2}(P(n,1))=n$.
\label{thm:2rainbowpn1}
\end{Theorem}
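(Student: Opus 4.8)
The plan is to prove the two inequalities separately, noting that the lower bound is already free. Indeed, combining $\gamma_{I}(G)\le\gamma_{r2}(G)$ (Chellali et al.) with Theorem~\ref{thm:Italianpn1} gives at once
$\gamma_{r2}(P(n,1))\ge\gamma_{I}(P(n,1))=n$, so all the content lies in the upper bound $\gamma_{r2}(P(n,1))\le n$.

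For the upper bound I would exhibit an explicit $2$-rainbow dominating function $g$ of weight $n$, working in the column notation $V^{i}=\{v_{2i},v_{2i+1}\}$. When $n$ is even, take the ``diagonal'' function: for even $i$ set $g(v_{2i})=\{1\}$, $g(v_{2i+1})=\emptyset$, and for odd $i$ set $g(v_{2i})=\emptyset$, $g(v_{2i+1})=\{2\}$. Each column then contributes weight $1$, so $w(g)=n$. It remains to check the rainbow condition at every vertex with $g(v)=\emptyset$: such a vertex is either an outer vertex $v_{2i}$ with $i$ odd --- its two cycle-neighbours lie in even columns and carry $\{1\}$, while its spoke-neighbour carries $\{2\}$ --- or an inner vertex $v_{2i+1}$ with $i$ even --- its two cycle-neighbours lie in odd columns and carry $\{2\}$, while its spoke-neighbour carries $\{1\}$. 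In both cases the union over the neighbourhood is $\{1,2\}$.

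When $n$ is odd the diagonal pattern cannot close up cyclically, so I would keep it on the columns $0,1,\dots,n-2$ (an even number of columns) and modify the last column by setting $g(v_{2n-2})=\emptyset$ and $g(v_{2n-1})=\{2\}$; the weight is still $(n-1)+1=n$. All vertices outside the seam columns $n-2$, $n-1$, $0$ retain regular-pattern neighbours on both sides, so the even-case argument applies verbatim to them; the finitely many vertices inside these three columns are then checked by hand. For example $v_{2n-2}$ is empty and receives $\{2\}$ from its spoke-neighbour $v_{2n-1}$ and $\{1\}$ from its outer-neighbour $v_{0}$, and the outer vertex $v_{2n-4}$ of column $n-2$, which in the regular pattern would have drawn $\{1\}$ from $v_{2n-2}$, instead draws it from its other outer-neighbour $v_{2n-6}$ while picking up $\{2\}$ from its spoke-neighbour.

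The only real obstacle --- a mild one --- is precisely this bookkeeping at the seam for odd $n$: one must confirm that deleting $\{1\}$ from $v_{2n-2}$ does not leave a neighbour undominated, which is exactly the $v_{2n-4}$ check above. The hypothesis $n\ge 5$ is what guarantees that the seam columns $n-2,n-1,0$ together with their regular-pattern neighbours $n-3,1$ are pairwise distinct, so that this local case analysis does not degenerate; everything else reduces to the routine neighbourhood verifications sketched above.
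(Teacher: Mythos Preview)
Your argument is correct, but note that the paper does not actually prove this statement: Theorem~\ref{thm:2rainbowpn1} is quoted directly from Shao et al.~\cite{shao20192rainbowpnk} and carries no proof here. So you are not reproducing the paper's reasoning; you are supplying a self-contained substitute.

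That said, your substitute is a nice one, and worth comparing with what the paper (implicitly) does. For the lower bound you recycle the paper's own Theorem~\ref{thm:Italianpn1} via the general inequality $\gamma_{I}\le\gamma_{r2}$; this is economical and perfectly legitimate, since Theorem~\ref{thm:Italianpn1} is established earlier by the bagging argument and does not depend on Theorem~\ref{thm:2rainbowpn1}. For the upper bound your diagonal $2$RDF is essentially a coloured version of the IDF in Theorem~\ref{thm:lowerboundpn1}, with the seam patch at column $n-1$ for odd $n$; your verification that $v_{2n-4}$ still sees both colours (from $v_{2n-6}$ and its spoke) is the only nontrivial check, and it goes through exactly as you say once $n\ge 5$ guarantees the five columns $n-3,n-2,n-1,0,1$ are distinct. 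The paper gains brevity by citing; your route gains self-containment and shows directly how the Italian and $2$-rainbow parameters coincide for $P(n,1)$ without invoking~\cite{shao20192rainbowpnk}.
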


By Theorem \ref{thm:Italianpn1}-\ref{thm:2rainbowpn1}, $P(n,1)$ is an example for which $\gamma_{I}=\gamma_{r2}$.

\begin{Theorem}\emph{(\cite{Ebrahimi2009dominationpnk})}
If $n\geq3$, then we have
\begin{equation*}
\gamma(P(n, 1))=
\begin{cases}
\frac{n}{2}+1           &if\ n\equiv2(\bmod\ 4),\\
\lceil\frac{n}{2}\rceil &otherwise.
\end{cases}\notag
\end{equation*}
\label{thm:dominationpn1}
\end{Theorem}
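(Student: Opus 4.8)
The plan is to prove matching lower and upper bounds, splitting on $n \bmod 4$; the only case needing genuine work is $n\equiv 2\pmod 4$.

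For the upper bound I would write down explicit dominating sets, in a matrix notation like the one above (now a $0/1$ array, with $1$ marking a vertex of the dominating set $D$). The basic building block is the period‑four pattern that puts one vertex of $D$ on the outer cycle in every column $\equiv 0\pmod 4$ and one on the inner cycle in every column $\equiv 2\pmod 4$; a direct check shows every vertex is then dominated exactly once, so for $n\equiv 0\pmod 4$ this gives a dominating set of size $n/2=\lceil n/2\rceil$. For $n\equiv 1\pmod 4$ (resp.\ $n\equiv 3\pmod 4$) I would run this pattern on an initial block of $n-1$ (resp.\ $n-3$) columns and patch the remaining $1$ (resp.\ $3$) columns with one (resp.\ two) extra vertices of $D$, giving size $\lceil n/2\rceil$; for $n\equiv 2\pmod 4$ I would patch the last two columns with two extra vertices, giving size $n/2+1$. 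These are all routine verifications of the domination condition at the boundary of the patched region.

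For the lower bound, note first that $P(n,1)$ is $3$‑regular, so $|N[v]|=4$ for every $v$ and hence $\gamma(P(n,1))\ge \lceil 2n/4\rceil=\lceil n/2\rceil$; combined with the constructions this already settles $n\not\equiv 2\pmod 4$. The remaining claim is $\gamma(P(n,1))\ge n/2+1$ when $n\equiv 2\pmod 4$, i.e.\ that there is no dominating set of size $n/2$. Suppose $D$ were one. Since $\sum_{v\in D}|N[v]|=4\cdot\frac n2=2n=|V(P(n,1))|$ and $N[D]=V$, the closed neighbourhoods $\{N[v]:v\in D\}$ must partition $V$, so $D$ is an efficient dominating set (perfect code). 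I would then run a column‑by‑column analysis with $V^i=\{v_{2i},v_{2i+1}\}$: (i) no column contains two members of $D$, since $v_{2i}$ and $v_{2i+1}$ are adjacent; (ii) if $v_{2i}\in D$, then $v_{2i}$ already dominates $v_{2i\pm 2}$ and $v_{2i+1}$, so by perfectness none of $v_{2i\pm 2},v_{2i\pm 3}$ lies in $D$, whence $V^{i-1}$ and $V^{i+1}$ are empty (and symmetrically if $v_{2i+1}\in D$); (iii) there are no two consecutive empty columns, since a vertex of such a block would be forced to be dominated from a column already known to be full, or would force a second member of $D$ into one column. From (ii) and (iii) the columns alternate strictly empty/occupied around the cycle, so there are exactly $n/2$ occupied columns, each holding a single vertex of $D$; and one more use of perfectness shows the vertex of $D$ in an occupied column lies on the outer cycle if and only if the one two columns further along lies on the inner cycle. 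Thus the ``side'' of the code vertex alternates around the cyclic sequence of $n/2$ occupied columns, which forces $n/2$ to be even, contradicting $n\equiv 2\pmod 4$. Hence no size‑$n/2$ dominating set exists, $\gamma(P(n,1))\ge n/2+1$, and with the matching construction the theorem follows.

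The main obstacle is step (iii) together with the side‑alternation argument: the bookkeeping of which vertices are forbidden from $D$ by perfectness must be organised carefully enough to exclude every ``mixed'' local configuration of empty and occupied columns, and this is where a naive case analysis tends to leave a gap.
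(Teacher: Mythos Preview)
The paper does not give its own proof of this theorem: it is quoted verbatim from Ebrahimi, Jahanbakht and Mahmoodian and used as a black box. So there is no argument in the present paper to compare your proposal against.

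That said, your outline is correct and is essentially the standard approach. The step you flag as the ``main obstacle'' is in fact clean. For (iii): if columns $i$ and $i+1$ are both empty, then $v_{2i}$ can only be dominated by $v_{2i-2}$ and $v_{2i+1}$ can only be dominated by $v_{2i-1}$, which puts both vertices of column $i-1$ into $D$ and contradicts (i) immediately. For the side alternation: if the outer vertex $v_{2i}$ of an occupied column $i$ lies in $D$, then the inner vertex $v_{2i+3}$ of the empty column $i+1$ has neighbours $v_{2i+1},v_{2i+2},v_{2i+5}$, the first two of which are excluded by (ii), so $v_{2i+5}\in D$ and column $i+2$ carries its inner vertex; the symmetric statement holds with inner and outer interchanged. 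Hence the parity obstruction forces $n/2$ even, and your proof goes through without gaps.
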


By Theorem \ref{thm:Italianpn1} and Theorem \ref{thm:dominationpn1}, $\gamma_{I}(P(n, 1))=2\gamma(P(n, 1))$ for $n\equiv0(\bmod\ 4)$. Hence,  $P(n,1)$ for $n\equiv0(\bmod\ 4)$ is Italian, while $P(n,1)$ for $n\not\equiv0(\bmod\ 4)$ is not Italian.

\section{The Italian domination number of $P(n,2)$}
\begin{Lemma}\emph{(\cite{Chellali2016roman-2})}
For every graph $G$, $\gamma_{I}(G)\leq \gamma_{r2}(G)$, where $\gamma_{r2}(G)$ is the 2-rainbow domination number.
\label{le2}
\end{Lemma}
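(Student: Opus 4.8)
The plan is to produce an IDF from a given $\gamma_{r2}$-function without increasing the weight, by a pointwise transformation of labels. First I would fix a $2$-rainbow dominating function $g\colon V\to\{\emptyset,\{1\},\{2\},\{1,2\}\}$ of minimum weight, so $w(g)=\sum_{v}|g(v)|=\gamma_{r2}(G)$. From $g$ I would define $f\colon V\to\{0,1,2\}$ simply by setting $f(v)=|g(v)|$; that is, $f(v)=0$ if $g(v)=\emptyset$, $f(v)=1$ if $g(v)$ is a singleton, and $f(v)=2$ if $g(v)=\{1,2\}$. By construction $w(f)=\sum_{v}f(v)=\sum_{v}|g(v)|=w(g)=\gamma_{r2}(G)$, so it only remains to check that $f$ is a legitimate IDF.

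The key step is verifying the IDF condition at every vertex $v$ with $f(v)=0$. Such a $v$ has $g(v)=\emptyset$, so by the definition of a $2$RDF we have $\bigcup_{u\in N(v)}g(u)=\{1,2\}$. In particular $1$ lies in $g(u_1)$ for some neighbor $u_1$ and $2$ lies in $g(u_2)$ for some neighbor $u_2$. If $u_1=u_2$, then that single neighbor $u$ has $g(u)=\{1,2\}$, hence $f(u)=2$ and $\sum_{u\in N(v)}f(u)\ge 2$. If $u_1\ne u_2$, then $g(u_1)$ and $g(u_2)$ are each nonempty, so $f(u_1)\ge 1$ and $f(u_2)\ge 1$, giving $\sum_{u\in N(v)}f(u)\ge f(u_1)+f(u_2)\ge 2$. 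Either way the Italian condition $\sum_{u\in N(v)}f(u)\ge 2$ holds, so $f$ is an IDF.

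There is essentially no obstacle here: the only thing to be careful about is the case split $u_1=u_2$ versus $u_1\ne u_2$ in the neighborhood, since it is exactly the coincidence $g(u)=\{1,2\}$ that lets a single weight-$2$ neighbor do the job when no two distinct weight-carrying neighbors are available. Combining the weight identity with the verified IDF property yields $\gamma_{I}(G)\le w(f)=w(g)=\gamma_{r2}(G)$, as claimed. (This is the result of Chellali et al.\ \cite{Chellali2016roman-2}; since the paper cites it as known, one may alternatively simply invoke the reference.)
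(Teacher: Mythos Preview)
Your argument is correct: replacing each label $g(v)$ of a $\gamma_{r2}$-function by its cardinality $|g(v)|$ yields an IDF of the same weight, and your case split on whether the colors $1$ and $2$ come from the same neighbor or distinct neighbors cleanly handles the IDF condition. The paper itself gives no proof of this lemma---it is stated purely as a cited result from Chellali et al.\ \cite{Chellali2016roman-2}---so your proposal in fact supplies more than the paper does; as you note at the end, one could equally well just invoke the reference.
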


\begin{Lemma}\emph{(\cite{tong2009pn2})}
\begin{equation*}
\gamma_{r2}(P(n, 2))=
\begin{cases}
\lceil\frac{4n}{5}\rceil     &n\equiv 0, 3, 4, 9(\bmod\ 10),\\
\lceil\frac{4n}{5}\rceil+1   &n\equiv 1, 2, 5, 6, 7, 8(\bmod\ 10).
\end{cases}\notag
\label{lem:tong2rainbowpn2}
\end{equation*}
\end{Lemma}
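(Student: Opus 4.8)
The plan is to prove the formula by a matching pair of bounds; the lower bound will reuse the bagging technique from \autoref{thm:upperboundpn1}, while the upper bound is an explicit construction.

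\textbf{Upper bound.} With the column notation $V^{i}=\{v_{2i},v_{2i+1}\}$, recall that in $P(n,2)$ the edges are the outer cycle $v_{2i}v_{2i+2}$, the spokes $v_{2i}v_{2i+1}$, and the inner edges $v_{2i+1}v_{2i+5}$ (indices mod $2n$). Since $P(n,2)$ has $2n$ vertices and the target weight is about $\tfrac{4n}{5}$, I would look for a block of five consecutive columns on which a $2$RDF of weight $4$ can be placed so that tiling the block cyclically leaves every $\emptyset$-labelled vertex with both a $\{1\}$- and a $\{2\}$-neighbour; this yields weight $4\cdot\tfrac n5=\tfrac{4n}{5}$ when $5\mid n$. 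For $n$ not divisible by $5$ I would keep $\lfloor n/5\rfloor$ copies of the length-$5$ block and splice in one leftover block of $1$, $2$, $3$ or $4$ columns, chosen so that its weight brings the total to $\lceil\tfrac{4n}{5}\rceil$ for $n\equiv0,3,4,9\,(\bmod\ 10)$ and to $\lceil\tfrac{4n}{5}\rceil+1$ otherwise. The only thing to verify is that the $2$RDF condition survives at the single seam where the leftover block is inserted, which is a finite local check.

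\textbf{Lower bound.} Here I would set $w(f_{i})=|f(v_{2i})|+|f(v_{2i+1})|$ and first record the constraints forced by light columns: if $w(f_{i})=0$, then the spoke, outer and inner edges at $V^{i}$ force $f(v_{2i-2})\cup f(v_{2i+2})=\{1,2\}$ and $f(v_{2i-3})\cup f(v_{2i+5})=\{1,2\}$, so several columns near $V^{i}$ must be heavy; a column with $w(f_{i})=1$ gives a weaker but still usable restriction. Using these, I would distribute the columns into consecutive bags of at most five columns whose total weight is at least $4$ each, so that summing over the bags gives $w(f)\ge\tfrac{4n}{5}$; a sharper endpoint count then upgrades this to $\lceil\tfrac{4n}{5}\rceil$. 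Finally, for the six residues $n\equiv1,2,5,6,7,8\,(\bmod\ 10)$ one must rule out any $2$RDF of weight exactly $\lceil\tfrac{4n}{5}\rceil$: assuming one existed, the constraints above would pin the arrangement of light columns down so tightly around the cycle that the distance-$2$ inner edges cannot all be satisfied, a contradiction.

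\textbf{Main obstacle.} The hard part is the lower bound, and within it the residue-dependent $+1$. Obtaining the constant $\tfrac{4n}{5}$ is a fairly mechanical discharging computation, but the inner edges of $P(n,2)$ join columns two apart, so the pattern of light columns interacts with the bag decomposition in a more global way than the outer-cycle edges did for $P(n,1)$; controlling this interaction and excluding every weight-$\lceil\tfrac{4n}{5}\rceil$ function in the six exceptional classes requires a somewhat delicate enumeration of how the weight-$0$ and weight-$1$ columns may be spaced — exactly the analysis carried out in \cite{tong2009pn2}.
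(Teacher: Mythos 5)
This lemma is not proved in the paper at all: it is quoted from Tong et al.\ \cite{tong2009pn2}, so there is no internal proof to compare your proposal against; what you have written is an attempted reconstruction of the argument of that reference, and it should be judged on its own.

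Judged that way, there is a concrete flaw in the upper-bound plan. You propose to find five consecutive columns carrying a $2$RDF of weight $4$ that tiles cyclically whenever $5\mid n$, giving weight $\frac{4n}{5}=\lceil\frac{4n}{5}\rceil$ for every $n\equiv0\pmod 5$ — in particular for $n\equiv5\pmod{10}$. But the formula you are proving assigns $\lceil\frac{4n}{5}\rceil+1$ precisely to $n\equiv5,8\pmod{10}$, so if such a period-$5$ block existed your construction would contradict the lower-bound half of your own statement; no such block can exist. The obstruction is the color condition: in an optimal pattern the roles of $\{1\}$ and $\{2\}$ must alternate between consecutive weight-$4$ blocks, so the minimal periodic $2$RDF pattern has period ten columns and tiles only when $10\mid n$. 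This color-parity issue is exactly the source of the extra $+1$ at $n\equiv5,8\pmod{10}$, and also why the Italian domination number of $P(n,2)$ (where no colors are involved) drops back to $\lceil\frac{4n}{5}\rceil$ at those residues, as the explicit IDF in the paper's Theorem~\ref{thm:upperboundpn2} shows; your plan misses this distinction between the two parameters. In addition, on the lower-bound side the decisive step — excluding every $2$RDF of weight exactly $\lceil\frac{4n}{5}\rceil$ in the six exceptional residue classes — is deferred verbatim to ``the analysis carried out in \cite{tong2009pn2}'', which is the entire difficulty; as written the proposal is an outline plus a citation rather than a proof, and its one concrete structural claim (the period-$5$ tiling) is false.
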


\begin{Theorem}
\begin{equation*}
\gamma_{I}(P(n, 2)) \leq
\begin{cases}
\lceil\frac{4n}{5}\rceil     &n\equiv 0, 3, 4(\bmod\ 5),\\
\lceil\frac{4n}{5}\rceil+1   &n\equiv 1, 2(\bmod\ 5).
\end{cases}\notag
\end{equation*}
\label{thm:upperboundpn2}
\end{Theorem}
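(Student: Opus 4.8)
The plan is to exhibit explicit Italian dominating functions on $P(n,2)$ achieving the claimed weights, organized by the residue of $n$ modulo $5$. Following the matrix notation introduced before Theorem~\ref{thm:lowerboundpn1}, I would first look for a short periodic block of columns, say a pattern $P$ spanning $5$ consecutive columns (i.e.\ $5$ pairs $v_{2i},v_{2i+1}$) that can be repeated, together with the wrap-around edges of the generalized Petersen graph $P(n,2)$ in mind. Concretely, since in $P(n,2)$ the inner vertex $v_{2i+1}$ is adjacent to $v_{2i-3}$ and $v_{2i+5}$, and the outer vertex $v_{2i}$ is adjacent to $v_{2i-2}$ and $v_{2i+2}$, I would try to place total weight $4$ on each block of $5$ columns (hence $\tfrac{4n}{5}$ overall when $5\mid n$), using only labels $0$ and $1$ (matching the ``red/blue'' picture in Figure~\ref{fig:pnk}), so that every $0$-vertex sees two $1$-vertices among its three neighbours.

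The main work is the case $n\equiv 0\ (\mathrm{mod}\ 5)$: here I would define $f$ as the $\tfrac{n}{5}$-fold repetition of one carefully chosen $2\times 5$ block of weight $4$, and then verify the IDF condition at each of the ten vertex-types in the block (five outer, five inner), checking that the pattern is consistent across the seam where one block meets the next and across the final wrap-around from column $n-1$ back to column $0$. For $n\equiv 3,4\ (\mathrm{mod}\ 5)$ I would take $\lfloor n/5\rfloor$ copies of the weight-$4$ block and append a short ``tail'' of $3$ or $4$ columns; I would need a tail of weight $3$ (resp.\ $4$) so that the total is $\lceil 4n/5\rceil=\lfloor 4n/5\rfloor+1$ (resp.\ $+1$), and the tail must be designed so that both of its two interfaces — with the last full block on the left and with the first full block on the right (through the cyclic wrap) — still satisfy the Italian condition. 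For $n\equiv 1,2\ (\mathrm{mod}\ 5)$ the analogous tails of length $1,2$ are too short to be ``efficient'', so I would instead allow one extra unit of weight, giving $\lceil 4n/5\rceil+1$; a clean way to do this is to use a $2\times 6$ or $2\times 7$ tail block of weight $5$ or $6$ and check the two interfaces as before.

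The step I expect to be the genuine obstacle is the \emph{seam verification}: the two ``long-range'' inner edges (skip-$2$) mean that the Italian condition at a vertex in column $i$ depends on columns $i-2,i-1,i+1,i+2$, so when two different blocks are concatenated one must check a window of four columns around every boundary, not just the boundary columns themselves; getting the tail blocks to mesh with the periodic block on \emph{both} sides simultaneously, for every residue class, is the fiddly part. I would handle this by first fixing the periodic block, then searching (by hand or small case analysis) for tails compatible with its left and right ``boundary profiles,'' and finally presenting each $f$ explicitly in the matrix form and asserting the (routine but case-heavy) verification that $w(f)$ equals the stated value and that $f$ is an IDF. Since the theorem only claims an upper bound, no optimality argument is needed here — that presumably comes in a later matching lower-bound theorem.
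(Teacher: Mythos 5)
Your strategy---fully explicit periodic constructions of weight $4$ per five columns, with tails for the non-zero residues---is a legitimate and genuinely different route from the paper's. The paper gets almost everything for free: by Lemma~\ref{le2} ($\gamma_I(G)\le\gamma_{r2}(G)$) together with Tong et al.'s formula for $\gamma_{r2}(P(n,2))$, the claimed bound follows immediately for all $n$ except $n\equiv 5,8\ (\bmod\ 10)$, and only for those two residue classes does it exhibit an explicit IDF, namely the block $\left(\begin{smallmatrix}0&1&0&0&1\\0&0&1&1&0\end{smallmatrix}\right)$ repeated $\frac{n}{5}$ (resp.\ $\frac{n-3}{5}$) times, followed for $n\equiv 8\ (\bmod\ 10)$ by the weight-$3$ tail $\left(\begin{smallmatrix}0&0&1\\1&1&0\end{smallmatrix}\right)$. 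Your plan, if executed, would be more self-contained (no reliance on the rainbow-domination literature), at the cost of a case-heavy verification for every residue class mod $5$.

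As written, however, there is a genuine gap: for an upper bound proved by construction, the construction \emph{is} the proof, and you never exhibit the $2\times 5$ block, the tails, or the seam checks---you only assert that a ``carefully chosen'' block exists and that compatible tails can be ``searched for.'' Such a block does exist (the one displayed above works for every $n\equiv 0\ (\bmod\ 5)$, since the IDF condition is local and the period divides $n$), but nothing in your text certifies this, and the verification at the seams is exactly where a wrong guess fails. In addition, your bookkeeping for $n\equiv 1,2\ (\bmod\ 5)$ does not add up: with $q-1$ full blocks, a $2\times 7$ tail must carry weight $7$ (not $5$ or $6$) to reach $\lceil\frac{4n}{5}\rceil+1$ when $n=5q+2$, and a $2\times 6$ tail needs weight $6$ when $n=5q+1$; the simpler choice is $q$ full blocks plus a $1$- or $2$-column tail of weight $2$ or $3$. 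Until the functions are written down explicitly and the four-column window around each interface (including the cyclic wrap) is checked, the theorem is not actually proved by your argument.
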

\begin{proof}
By Lemma \ref{le2}-\ref{lem:tong2rainbowpn2},
\begin{align}
\notag &\gamma_{I}(P(n, 2)) \leq\gamma_{r2}(P(n, 2))=\lceil\frac{4n}{5}\rceil   &n&\equiv 0, 3, 4, 9(\bmod\ 10),\\
\notag &\gamma_{I}(P(n, 2)) \leq\gamma_{r2}(P(n, 2))=\lceil\frac{4n}{5}\rceil+1 &n&\equiv 1, 2, 6, 7(\bmod\ 10).
\end{align}

For $n\equiv 5, 8(\bmod\ 10)$, we can get the upper bound is $\lceil\frac{4n}{5}\rceil$ instead of $\lceil\frac{4n}{5}\rceil+1$ by constructing an IDF $f$ as follows.

\begin{equation*}
f=\left\{
\begin{array}{llll}
{\left(\begin{array}{lllll}
0 &1 &0  &0  &1  \\
0 &0 &1  &1  &0  \\
\end{array}\right)}^\frac{n}{5}       &n \equiv5(\bmod\ 10),\\
{\left(\begin{array}{lllll}
0 &1 &0  &0  &1  \\
0 &0 &1  &1  &0  \\
\end{array}\right)}^\frac{n-3}{5}{\left(\begin{array}{lllll}
0  &0  &1\\
1  &1  &0\\
\end{array}\right)}        & n \equiv8(\bmod\ 10).\\
\end{array}\right.
\end{equation*}

Then $w(f)=\frac{n}{5}\times 4=\frac{4n}{5}$ for $n \equiv5(\bmod\ 10)$, $w(f)=\frac{n-3}{5}\times 4+3=\frac{4n+3}{5}$ for $n \equiv8(\bmod\ 10)$. So, $\gamma_{I}(P(n,2)) \leq \lceil\frac{4n}{5}\rceil$ for $n\equiv 5, 8(\bmod\ 10)$.

Thus,
\begin{equation*}
\gamma_{I}(P(n, 2)) \leq
\begin{cases}
\lceil\frac{4n}{5}\rceil     &n\equiv 0, 3, 4(\bmod\ 5),\\
\lceil\frac{4n}{5}\rceil+1   &n\equiv 1, 2(\bmod\ 5).
\end{cases}\notag
\end{equation*}
\end{proof}

Next we will prove the lower bound of $\gamma_{I}(P(n, 2))$ is equal to the upper bound.

\begin{Lemma}\emph{(\cite{Chellali2016roman-2})}
If $G$ is a connected graph and maximum degree $\Delta(G)=\Delta$, then $\gamma_{I}(G)\geq \frac{2|V(G)|}{\Delta+2}$.
\label{lem:pn2low}
\end{Lemma}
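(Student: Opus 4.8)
The plan is to argue by a discharging/counting argument on a fixed optimal IDF. Let $f$ be a $\gamma_I$-function of $G$ and partition $V(G)$ into $V_0$, $V_1$, $V_2$, where $V_j=\{v\in V(G): f(v)=j\}$. Then $\gamma_I(G)=w(f)=|V_1|+2|V_2|$, and in particular $|V_1|+|V_2|\le w(f)$, so that
\begin{equation*}
|V_0| = |V(G)|-|V_1|-|V_2| \ge |V(G)|-w(f).
\end{equation*}
This is the ``easy'' half of the estimate and needs no hypothesis on $G$ beyond finiteness.

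Next I would bound $|V_0|$ from above using the IDF condition together with $\Delta(G)=\Delta$. For every $v\in V_0$ the defining inequality gives $\sum_{u\in N(v)}f(u)\ge 2$, so summing over $V_0$,
\begin{equation*}
2|V_0| \le \sum_{v\in V_0}\sum_{u\in N(v)}f(u) = \sum_{u\in V(G)} f(u)\,\bigl|N(u)\cap V_0\bigr| \le \sum_{u\in V(G)} f(u)\,\deg(u) \le \Delta\sum_{u\in V(G)}f(u) = \Delta\, w(f),
\end{equation*}
where the middle equality is just a change in the order of summation (counting, for each $u$ with $f(u)>0$, how many of its neighbours lie in $V_0$). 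Hence $|V_0|\le \tfrac{1}{2}\Delta\,w(f)$.

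Combining the two bounds on $|V_0|$ yields $|V(G)|-w(f)\le \tfrac12\Delta\,w(f)$, i.e. $|V(G)|\le w(f)\cdot\frac{\Delta+2}{2}$, which rearranges to $\gamma_I(G)=w(f)\ge \frac{2|V(G)|}{\Delta+2}$, as claimed. Connectedness is not actually needed for this chain of inequalities; it is stated presumably to keep the setting clean (and to ensure $\Delta\ge 1$ so the bound is meaningful). The only place one must be slightly careful is the step $|N(u)\cap V_0|\le \deg(u)$ combined with $\deg(u)\le\Delta$: this is where the maximum-degree hypothesis enters, and it is the crux of the argument, though it is entirely routine. No part of this presents a real obstacle; the proof is a short double-counting estimate of exactly the type used for the analogous lower bounds on $\gamma$, the Roman domination number, and $\gamma_{r2}$.
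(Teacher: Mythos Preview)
Your argument is correct: the double-counting of $\sum_{v\in V_0}\sum_{u\in N(v)}f(u)$ together with the trivial bound $|V_1|+|V_2|\le w(f)$ gives the inequality cleanly, and you are right that connectedness plays no essential role.

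As for comparison, the paper does not actually prove this lemma at all; it is quoted verbatim as a known result from Chellali, Haynes, Hedetniemi and McRae (\emph{Roman $\{2\}$-domination}, Discrete Appl.\ Math.\ 204 (2016)) and used as a black box to obtain the lower bound $\gamma_I(P(n,2))\ge\lceil 4n/5\rceil$ in the case $n\equiv 0,3,4\pmod 5$. So there is no in-paper proof to compare against, and your self-contained derivation is a welcome addition rather than a deviation.
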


\begin{Theorem} Let $G=P(n, 2)$, then
\begin{equation*}
\gamma_{I}(G) \ge
\begin{cases}
\lceil\frac{4n}{5}\rceil     &n\equiv 0, 3, 4(\bmod\ 5),\\
\lceil\frac{4n}{5}\rceil+1   &n\equiv 1, 2(\bmod\ 5).
\end{cases}\notag
\end{equation*}
\label{thm:lowerboundpn2}
\end{Theorem}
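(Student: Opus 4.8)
The plan is to start from Lemma~\ref{lem:pn2low}. Since $P(n,2)$ is $3$-regular on $2n$ vertices, it gives $\gamma_I(P(n,2))\ge 4n/5$, hence $\gamma_I(P(n,2))\ge\lceil 4n/5\rceil$ because the left side is an integer; this already settles $n\equiv 0,3,4\pmod 5$, where the claimed bound is exactly $\lceil 4n/5\rceil$. So the only real work is the extra $+1$ when $n\equiv 1,2\pmod 5$, and for this I would make the double counting behind Lemma~\ref{lem:pn2low} exact. Writing $V_0,V_1,V_2$ for the label classes of an IDF $f$ and evaluating $\sum_{v\in V_0}\sum_{u\in N(v)}f(u)$ in two ways (as $2|V_0|+b$ from the IDF condition, and as $3w(f)-c$ using $3$-regularity), together with $|V_0|=2n-w(f)+a$, one gets the identity $5\,w(f)=4n+2a+b+c$, where $a=|V_2|\ge 0$, $b=\sum_{v\in V_0}\bigl(\sum_{u\in N(v)}f(u)-2\bigr)\ge 0$ is the total slack in the IDF condition, and $c=\sum_{u\notin V_0}f(u)\,|N(u)\cap(V_1\cup V_2)|\ge 0$ measures edges inside the positively labelled set. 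Assuming for contradiction that $w(f)=\lceil 4n/5\rceil$ forces $2a+b+c=1$ if $n\equiv 1\pmod 5$ and $2a+b+c=2$ if $n\equiv 2\pmod 5$.

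Next I would exploit how small $2a+b+c$ is. A parity observation comes first: when $a=0$ one has $c=2\,e(G[V_1])$, so $c$ is even, which rules out every configuration with $c$ odd. The survivors are $(a,b,c)=(0,1,0)$ for $n\equiv 1$ and $(a,b,c)\in\{(1,0,0),(0,2,0),(0,0,2)\}$ for $n\equiv 2$. In each of them $f$ is ``almost perfect'': $V_2$ is empty or a single vertex, $V_1$ is independent up to at most one edge with almost every positively labelled vertex having all three neighbours in $V_0$, and almost every vertex of $V_0$ has exactly two neighbours in $V_1$ and one in $V_0$, so $G[V_0]$ is almost a perfect matching.

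The combinatorial core is then a structural analysis on the outer $n$-cycle $v_0v_2\cdots v_{2n-2}$. Its vertices labelled $1$ are (essentially) isolated on this cycle, and every maximal run of $0$'s between two consecutive $1$'s has length $1$ or $2$: a run of length $\ge 3$ would contain an outer vertex both of whose outer-neighbours are labelled $0$, which (when $a=0$) cannot reach neighbour-sum $2$. Reading off the spokes, a length-$2$ run forces both of its spoke vertices into $V_1$ while a length-$1$ run forces its spoke into $V_0$, and independence of $V_1$ along the inner cycle(s) $v_{2i+1}\sim v_{2i+5}$ forbids two consecutive length-$2$ runs. Letting $p$ be the number of outer vertices labelled $1$ and $q$ the number of length-$2$ runs, this yields $n=2p+q$ and $w(f)=p+2q$ in the perfectly regular situation, with an explicit $O(1)$ correction in each exceptional configuration once the one or two defect sites are located. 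Substituting into $5\,w(f)=4n+2a+b+c$ collapses to a congruence of the form $2a+b+c\equiv 0\pmod 3$ (it equals $0$ in the perfect case and $3$ when $n\equiv 3$), which is incompatible with $2a+b+c\in\{1,2\}$; that is the contradiction. Combined with Theorem~\ref{thm:upperboundpn2} this also shows the bound is tight.

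The main obstacle is carrying out this last step honestly: making the ``almost perfect'' description rigorous and pushing the run-decomposition of the outer cycle through each exceptional configuration. The most delicate case is $(a,b,c)=(1,0,0)$, relevant to $n\equiv 2\pmod 5$: near the unique vertex labelled $2$ the outer cycle may carry a run of $0$'s of length $3$, the clean dictionary between runs and spoke labels breaks locally, and one must argue by hand that the forced local pattern around a $2$ cannot be completed around a cycle of length $n\equiv 2\pmod 5$. I expect that enumerating these boundary situations, and similarly pinning down the defect spots in the cases $(0,2,0)$ and $(0,0,2)$, is where most of the length of the proof goes.
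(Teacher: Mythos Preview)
Your identity $5\,w(f)=4n+2a+b+c$ is correct and is exactly the content of the paper's discharging function: the paper defines a local weight $g_f(v)$ with $\sum_v g_f(v)=w(f)$ and works with the residual $r_f(V)=w(f)-\tfrac{4n}{5}$, which in your variables is $(2a+b+c)/5$. Your enumeration of the surviving triples $(a,b,c)=(0,1,0)$ for $n\equiv1$ and $(a,b,c)\in\{(1,0,0),(0,2,0),(0,0,2)\}$ for $n\equiv2$ is also right, and the parity observation that $c=2\,e(G[V_1])$ when $a=0$ is a clean way to cut the list. In this respect your plan and the paper's are the same argument in different bookkeeping; one small payoff of your packaging is that the paper's ``Case~9'' (the perfectly regular configuration $a=b=c=0$) is visibly impossible for $n\equiv1,2\pmod5$ by integrality of $w(f)$ alone, whereas the paper spends a page propagating labels to reach the same conclusion.

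The genuine gap is the asserted congruence $2a+b+c\equiv 0\pmod 3$. It holds in the defect-free regime because there $5w(f)-4n=3(2q-p)$, but it is \emph{not} a general fact about IDFs of $P(n,2)$: already the optimal functions for $n\equiv 2\pmod 5$ have $w(f)=\lceil 4n/5\rceil+1$ and hence $2a+b+c=7\not\equiv 0\pmod 3$. So no residue-class obstruction is available; whether the $O(1)$ corrections near a defect keep $5w(f)-4n$ divisible by $3$ is precisely the question, and deciding it means locating the defect, propagating the forced labels along the outer and inner cycles, and producing a second defect. That is what the paper actually does: its Cases~6, 7, 8 correspond to your triples $(1,0,0)$, $(0,0,2)$, $(0,b,0)$ respectively, and in each it starts at the defect vertex, reads off forced labels using tightness of all other constraints, and after a bounded number of steps manufactures an extra $V_1$--$V_1$ edge, a second $V_0$ vertex with three $V_1$-neighbours, or an IDF violation. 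Your run-decomposition of the outer cycle is the right picture for this propagation, but the proof is the propagation itself, not a mod-$3$ shortcut; you should expect to carry out essentially the same casework as the paper.
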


\begin{proof}
Case 1. $n\equiv 0, 3, 4 \pmod {5}$. In $G=P(n,2)$, $\Delta(G)=3$, $|V(G)|=2n$, by Lemma \ref{lem:pn2low},
$\gamma_{I}(P(n,2)) \geq\big\lceil\frac{4n}{5}\big\rceil.$

Case 2. $n\equiv 1, 2 \pmod {5}$.
Let $f=(V_{0}, V_{1}, V_{2})$ be a $\gamma_{I}$-function of $P(n,2)$, where $V_{i}=\{v\in V|f(v)=i\}$, then $\gamma_{I}(P(n,2))=w(f)=\sum_{v \in V}f(v)=|V_1|+2|V_2|$. First, we prove $\gamma_{I}(P(n,2))\ge \lceil\frac{4n}{5}\rceil+1$, i.e. $\gamma_{I}(P(n,2)) >\lceil\frac{4n}{5}\rceil$ is equivalent to $r_{f}(V) >0.2$ for $n\equiv 1\pmod {5}$ and $r_{f}(V) >0.4$ for $n\equiv 2\pmod {5}$, where

\begin{align}
\notag r_{f}(V)&=\sum\limits_{v\in V}r_{f}(v)=\sum\limits_{v\in V}(g_{f}(v)-0.4)=g_{f}(V)-0.8n,\\
\notag g_{f}(v) &=
\begin{cases}
0.2|V_{1}\cap N(v)| +0.5|V_{2}\cap N(v)|     &v\in V_{0},\\
0.4+0.2|V_{1}\cap N(v)| +0.5|V_{2}\cap N(v)| &v\in V_{1},\\
0.5+0.2|V_{1}\cap N(v)| +0.5|V_{2}\cap N(v)| &v\in V_{2}.\\
\end{cases}
\end{align}

In fact,
\begin{equation*}
\begin{array}{lllllll}
g_{f}(V)&=& \sum\limits_{v\in V}g_{f}(v)\\
&=& \sum\limits_{v\in V_{0}}g_{f}(v) +  \sum\limits_{v\in V_{1}}g_{f}(v) +  \sum\limits_{v\in V_{2}}g_{f}(v)\\
& = & \sum\limits_{v\in V_{0}}(0.2|V_{1}\cap N(v)|+ 0.5|V_{2}\cap N(v)|)\\
  &&+\sum\limits_{v\in V_{1}}(0.4+0.2|V_{1}\cap N(v)|+ 0.5|V_{2}\cap N(v)|)\\
&& +\sum\limits_{v\in V_{2}}(0.5+0.2|V_{1}\cap N(v)|+ 0.5|V_{2}\cap N(v)|)\\
& = &0.2\sum\limits_{v\in V_{1}}|N(v)\cap V_{0}|+ 0.5\sum\limits_{v\in V_{2}}|N(v)\cap V_{0}|\\
  &&+0.4|V_{1}|+0.2\sum\limits_{v\in V_{1}}|N(v)\cap V_{1}|+ 0.5\sum\limits_{v\in V_{2}}|N(v)\cap V_{1}|\\
  &&+0.5|V_{2}|+0.2\sum\limits_{v\in V_{1}}|N(v)\cap V_{2}| + 0.5\sum\limits_{v\in V_{2}}|N(v)\cap V_{2}|\\
& = & 0.4|V_{1}| + 0.2\sum\limits_{v\in V_{1}}|N(v)\cap V_{0}|+ 0.2\sum\limits_{v\in V_{1}}|N(v)\cap V_{1}| +0.2\sum\limits_{v\in V_{1}}|N(v)\cap V_{2}| \\
   &&+ 0.5|V_{2}| + 0.5\sum\limits_{v\in V_{2}}|N(v)\cap V_{0}|+ 0.5\sum\limits_{v\in V_{2}}|N(v)\cap V_{1}| +0.5\sum\limits_{v\in V_{2}}|N(v)\cap V_{2}| \\
& = & 0.4|V_{1}|+0.2\sum\limits_{v\in V_{1}}|N(v)|+ 0.5|V_{2}| + 0.5\sum\limits_{v\in V_{2}}|N(v)|\\
& = & 0.4|V_{1}|+0.2\times3|V_{1}|+ 0.5|V_{2}| + 0.5\times3|V_{2}|=|V_{1}|+2|V_{2}|=\gamma_{I}(P(n,2)).
\end{array}
\end{equation*}

Then $r_{f}(V)=g_{f}(V)-0.8n=\gamma_{I}(P(n,2))-0.8n$ i.e. $\gamma_{I}(P(n,2))=r_{f}(V)+0.8n$. Thus,
$\gamma_{I}(P(n,2)) >\lceil\frac{4n}{5}\rceil$ is equivalent to $r_{f}(V)>\big\lceil\frac{4n}{5}\big\rceil-0.8n$.

For $n\equiv 1(\bmod\ 5)$, let $n=5q+1$, then  $\big\lceil\frac{4n}{5}\big\rceil-0.8n=\big\lceil\frac{4(5q+1)}{5}\big\rceil-\frac{4(5q+1)}{5}=4q+1-4-\frac{4}{5}=0.2$.

For $n\equiv 2(\bmod\ 5)$, let $n=5q+2$, then $\big\lceil\frac{4n}{5}\big\rceil-0.8n=\big\lceil\frac{4(5q+2)}{5}\big\rceil-\frac{4(5q+2)}{5}=4q+2-4q-\frac{8}{5}=0.4$.

Hence, $\gamma_{I}(P(n,2)) >\lceil\frac{4n}{5}\rceil$ is equivalent to $r_{f}(V) >0.2$ for $n\equiv 1\pmod {5}$ and $r_{f}(V) >0.4$ for $n\equiv 2\pmod {5}$.

Then, we will prove $r_{f}(V) >0.2$ for $n\equiv 1\pmod {5}$ and $r_{f}(V) >0.4$ for $n\equiv 2\pmod {5}$.

Let $E_{11}=\{(u,v)\in E|u, v\in V_1\}$, $E_{12}=\{(u,v)\in E|u\in V_1, v\in V_2\}$, then we have some findings shown in Table \ref{tab:findings}. 

\begin{table}[!h]
\caption{Findings}\label{tab:findings}
\begin{center}
\begin{tabular}{|p{3cm}|p{3cm}|p{3cm}| p{3cm}|}
\hline
Finding 1 & Finding 2 & Finding 3 & Finding 4\\
&&&\\
For each vertex $v\in V$, $g_{f}(v)\geq0.4$.&
\begin{minipage}{0.2\textwidth}
     \includegraphics[scale=1.0]{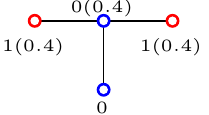}
\end{minipage}
&
\begin{minipage}{0.2\textwidth}
     \includegraphics[scale=1.0]{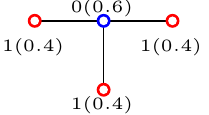}
\end{minipage}
&
\begin{minipage}{0.2\textwidth}
     \includegraphics[scale=1.0]{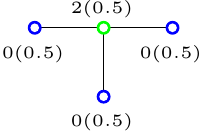}
\end{minipage}
 \\
&The numbers outside the brackets are $f(v)$ and the numbers inside the brackets are $g_{f}(v)$.&
If $\exists$ $v\in V_0$ with $|N(v)\cap V_1|=3$, then $r_{f}(V)\geq 0.2$.&
If $\exists$ $v\in V_2$, then $r_{f}(V)\geq 0.4$.\\
&
If $\exists$ $v\in V_0$ with $|N(v)\cap V_1|=2$, then $r_{f}(V)\geq 0$.&
& \\ \hline
Finding 5 & Finding 6 & Finding 7 & Finding 8\\
&&&\\
\begin{minipage}{0.2\textwidth}
     \includegraphics[scale=1.0]{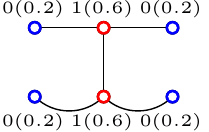}
\end{minipage}
&
\begin{minipage}{0.2\textwidth}
     \includegraphics[scale=1.0]{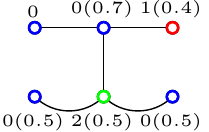}
\end{minipage}
&
\begin{minipage}{0.2\textwidth}
     \includegraphics[scale=1.0]{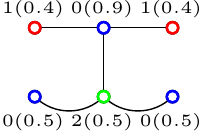}
\end{minipage}
&
\begin{minipage}{0.2\textwidth}
     \includegraphics[scale=1.0]{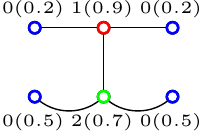}
\end{minipage}
\\
&&&\\
If $\exists$ $(u,v)\in E_{11}$, then $r_{f}(V)\geq 0.4$.&
If $\exists$ $v\in V_0$ with $|N(v)\cap V_1|=|N(v)\cap V_2|=1$, then $r_{f}(V)\geq 0.6$.&
If $\exists$ $v\in V_0$ with $|N(v)\cap V_1|=2$, $|N(v)\cap V_2|=1$, then $r_{f}(V)\geq 0.8$.&
If $\exists$ $(u,v)\in E_{12}$, then $r_{f}(V)\geq 1$.
\\ \hline
\end{tabular}
\end{center}
\end{table}

By Finding 4-Finding 8, in the following cases (Case 1-Case 5) $r_{f}(V)>0.4$.

Case 1. $|V_2|\geq 2$.

Case 2. $|E_{11}|\geq 2$.

Case 3. $|V_2|+|E_{11}|\geq 2$.

Case 4. $|E_{12}|\geq 1$.

Case 5. $\exists$ $v\in V_0$ with $|N(v)\cap V_1|\ge1$, $|N(v)\cap V_2|=1$.

Besides Case 1-Case 5, there are four cases.

Case 6. $|V_2|=1$.

Case 7. $|E_{11}|=1$.

Case 8. $\exists$ $v\in V_0$ with $|N(v)\cap V_1|=3$.

Case 9. Excluding all of above cases, then $|V_2|=0$, $|E_{11}|=0$ and $|N(v)\cap V_1|\leq 2$ for $v\in V_0$. According to IDF definition, there must exist $v_{i}$ with $f(v_{i})=1$ for $i(\bmod2)=0$.

Next, we will prove in Case 6-Case 9 $r_{f}(V) >0.2$ for $n\equiv 1\pmod {5}$ and $r_{f}(V) >0.4$ for $n\equiv 2\pmod {5}$.

Case 6. $|V_2|=1$. Let $f(v_{0})=2$ or $f(v_{1})=2$.
For $n\equiv 1\pmod {5}$, by Finding 4, $r_{f}(V)\ge 0.4>0.2$.
For $n\equiv 2\pmod {5}$, by contrary, suppose $r_{f}(V)\leq 0.4$.

If $f(v_{0})=2$, by excluding Case 1 and Case 4, $f(v_{1})=f(v_2)=0$. By excluding  Case 1 and Case 5, $f(v_3)=f(v_4)=0$. Then by the definition of IDF, $f(v_5)=1$.
Consequently, $v_1\in V_0$ and $|N(v_1)\cap V_1|=|N(v_1)\cap V_2|=1$, by Finding 6, $r_{f}(V)\ge 0.6>0.4$. This contradicts the hypothesis.

If $f(v_{1})=2$, by excluding Case 1, Case 4, and Case 5, $f(v_{0})=f(v_{5})=0$, $f(v_{2})=0$. By the definition of IDF,  $f(v_{3})=f(v_{4})=1$.
Consequently, $v_5\in V_0$ and $|N(v_5)\cap V_1|=|N(v_5)\cap V_2|=1$, by Finding 6, $r_{f}(V)\ge 0.6> 0.4$, a contradiction.

The IDF $f$ in this case is shown in Figure \ref{01}, where green, red and blue vertices stand for $f(v)=2,1,0$, respectively.
\begin{figure}[!ht]
\centering
\includegraphics[scale=1.0]{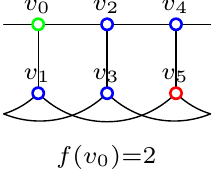}
\hspace{10bp}
\includegraphics[scale=1.0]{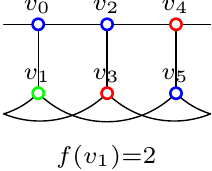}
\caption{$f$ in Case 6.}\label{01}
\end{figure}

Case 7. $|E_{11}|=1$. For $n\equiv 1\pmod {5}$, by Finding 5, $r_{f}(V)\ge 0.4> 0.2$.
For $n\equiv 2\pmod {5}$, by contrary, suppose $r_{f}(V)\leq 0.4$. Let $f(v_0)=f(v_1)=1$ or $f(v_0)=f(v_2)=1$ or $f(v_1)=f(v_5)=1$.

If $f(v_0)=f(v_1)=1$, by excluding Case 2 and Case 3, $f(v_2)=f(v_5)=0$. Then $f(v_4)=1$.
After that, $f(v_3)=1$ or $0$. If $f(v_3)=1$, then $|N(v_2)\cap V_1|=3$, by Finding 3 and Finding 5, $r_{f}(V)\ge 0.6> 0.4$, a contradiction. Thus, $f(v_3)=0$. By excluding Case 2, $f(v_6)=0$. Then $f(v_7)=1$. After that, $f(v_8)=1$ or $0$. If $f(v_8)=1$, then $|N(v_6)\cap V_1|=3$, by Finding 3 and Finding 5, $r_{f}(V)\ge 0.6> 0.4$, a contradiction. So, $f(v_8)=0$. By the definition of IDF, $f(v_9)=1$. Then, $f(v_5)=0$ and $|N(v_5\cap V_1|=3$, by Finding 3 and Finding 5, $r_{f}(V)\ge 0.6> 0.4$, a contradiction.

If $f(v_0)=f(v_2)=1$,
by excluding Case 2 and Case 3, $f(v_1)=f(v_3)=f(v_4)=0$. Then $f(v_5)=1$.
After that, $f(v_{6})=1$ or $0$. If $f(v_6)=1$, then $|N(v_4)\cap V_1|=3$, by Finding 3 and Finding 5, $r_{f}(V)\ge0.6> 0.4$, a contradiction. So, $f(v_{6})=0$. By the definition of IDF, $f(v_7)=f(v_8)=1$. By excluding Case 2, $f(v_9)=f(v_{10})=f(v_{11})=0$. Then $f(v_{12})=1$.
By excluding Case 2, $f(v_{13})=f(v_{14})=0$. Then $f(v_{15})=1$.
After that, $f(v_{16})=1$ or $0$. If $f(v_{16})=1$, then $|N(v_{14})\cap V_1|=3$, by Finding 3 and Finding 5, $r_{f}(V)\ge0.6> 0.4$, a contradiction. So, $f(v_{16})=0$. By IDF definition, $f(v_{17})=f(v_{18})=1$. By excluding Case 2, $f(v_{19})=f(v_{20})=f(v_{21})=0$.
Continue in this way, it has $f(v_i)=1$, $i\equiv2,5,7,8(\bmod\ 10)$ and $f(v_i)=0$, $i\equiv3,4,6,9,10,11(\bmod\ 10)$.
Since $n=5q+2$, $2n-2=10q+2$, then $v_{2n-2}\in V_1$, $v_{0}\in V_{1}$, and $(v_{2n-2},v_{0})\in E$, by Finding 5, $r_{f}(V)\ge0.8>0.4$, a contradiction.

If $f(v_1)=f(v_5)=1$,
By excluding Case 2 and Case 3, $f(v_0)=f(v_4)=f(v_9)=0$. Then $f(v_2)=1$. By excluding Case 2, $f(v_3)=0$.
After that, $f(v_6)=0$ or $1$. If $f(v_6)=1$, then $v_4\in V_0$ and $|N(v_4)\cap V_1|=3$, by Finding 3 and Finding 5, $r_{f}(V)\ge0.6>0.4$, a contradiction. So, $f(v_{6})=0$. By IDF definition, $f(v_7)=f(v_8)=1$.
By excluding Case 2, $f(v_{10})=f(v_{11})=0$. Then $f(v_{12})=1$.
By excluding Case 2, $f(v_{13})=f(v_{14})=0$. Then, $f(v_{15})=1$.
After that, $f(v_{16})=0$ or $1$. If $f(v_{16})=1$, then $|N(v_{14})\cap V_1|=3$, by Finding 3 and Finding 5, $r_{f}(V)\ge0.6>0.4$, a contradiction. So, $f(v_{16})=0$. According to IDF definition, $f(v_{17})=f(v_{18})=1$.
By excluding Case 2, $f(v_{19})=f(v_{20})=f(v_{21})=0$. Then $f(v_{22})=1$.
Continue in this way, it has $f(v_i)=1$, $i\equiv2,5,7,8(\bmod\ 10)$ and $f(v_i)=0$, $i\equiv3,4,6,9,10,11(\bmod\ 10)$.
Since $n=5q+2$, $2n-2=10q+2$, then $v_{2n-2}\in V_1$, $v_{0}\in V_{0}$ and $|N(v_{0})\cap V_1|=3$, by Finding 3 and Finding 5, $r_{f}(V)\ge0.6>0.4$, a contradiction.

The IDF $f$ in this case is shown with Figure \ref{1022}.

\begin{figure}[!ht]
\centering
\includegraphics[scale=1.0]{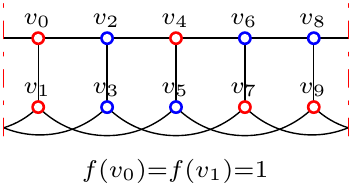}\hspace{5bp}
\includegraphics[scale=1.0]{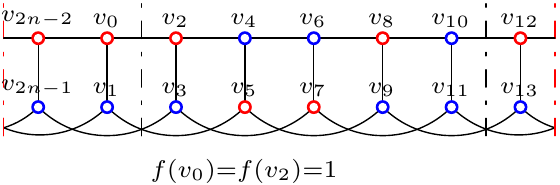}\\
\includegraphics[scale=1.0]{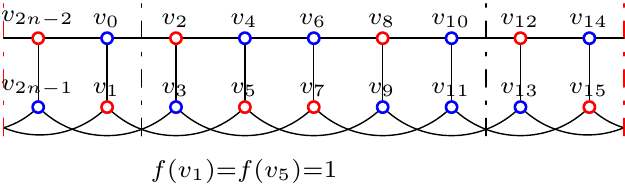}\hspace{5bp}
\caption{$f$ in Case 7.} \label{1022}
\end{figure}

Case 8. $\exists$ $v\in V_0$ with $|N(v)\cap V_1|=3$.
By contrary, suppose $r_{f}(V)\leq 0.2$ for $n\equiv1(\bmod5)$ and $r_{f}(V)\leq 0.4$ for $n\equiv2(\bmod5)$. Let $f(v_{2})=0$, $f(v_{0})=f(v_{3})=f(v_{4})=1$.
By excluding Case 6 and Case 7, $f(v_1)=f(v_5)=f(v_6)=f(v_7)=0$. By the definition of IDF, $f(v_8)=f(v_9)=1$.
Then, $(v_{8},v_{9})\in E$, by Finding 3 and Finding 5, $r_{f}(V)\ge0.6>0.4>0.2$, a contradiction. The $f$ is shown with Figure \ref{359}.
\begin{figure}[!ht]
\centering
\includegraphics[scale=1.0]{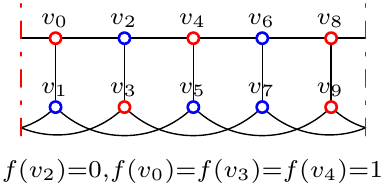}
\caption{$f$ in Case 8.} \label{359}
\end{figure}

Case 9. $\exists$ $v_{i}$ with $f(v_{i})=1$ and $i(\bmod2)=0$.
By contrary, suppose $r_{f}(V)\leq 0.2$ for $n\equiv1(\bmod5)$ and $r_{f}(V)\leq 0.4$ for $n\equiv2(\bmod5)$.
Let $f(v_{0})=1$.
By excluding Case 6 and Case 7, $f(v_1)=f(v_2)=0$. Then $f(v_3)=1$ or $0$.

If $f(v_{3})=1$.
By excluding Case 8, $f(v_4)=0$.
According to IDF definition, $f(v_5)=f(v_6)=1$.
By excluding Case 7, $f(v_7)=f(v_8)=f(v_9)=0$. Then $f(v_{10})=1$.
By excluding Case 7, $f(v_{11})=f(v_{12})=0$. Then $f(v_{13})=1$.
By excluding Case 8, $f(v_{14})=0$. Then $f(v_{15})=f(v_{16})=1$.
By excluding Case 7, $f(v_{17})=f(v_{18})=f(v_{19})=0$.
By excluding Case 8, $f(v_{21})=0$.
According to IDF definition, $f(v_{20})=1$.
Continue in this way, it has $f(v_i)=1$, $i\equiv3,5,6,10(\bmod\ 10)$ and
$f(v_i)=0$, $i\equiv2,4,7,8,9,11(\bmod\ 10)$.
For $n=5q+1$, $2n-2=10q$, $f(v_{2n-2})=1$, then $v_{2n-2}, v_{0}\in V_{1}$ and $(v_{2n-2},v_{0})\in E$. For $n=5q+2$, $2n-1=10q+3$, $f(v_{2n-1})=1$, then $v_{3},v_{2n-1}\in V_1$ and $(v_{3},v_{2n-1})\in E$. By Case 7, $r_{f}(V)>0.2$ for $n\equiv1(\bmod5)$ and $r_{f}(V)>0.4$ for $n\equiv2(\bmod5)$, a contradiction. The IDF $f$ is shown in Figure \ref{1020}.

If $f(v_{3})=0$.
By definition of IDF, $f(v_{4})=1$.
By excluding Case 7, $f(v_{5})=f(v_{6})=0$. Then $f(v_{7})=1$.
By excluding Case 8, $f(v_{8})=0$. Then $f(v_{9})=f(v_{10})=1$.
By excluding Case 7, $f(v_{11})=f(v_{12})=f(v_{13})=0$. Then $f(v_{14})=1$.
By excluding Case 8, $f(v_{15})=f(v_{16})=0$. Then $f(v_{17})=1$.
By excluding Case 8, $f(v_{18})=0$. Then $f(v_{19})=f(v_{20})=1$.
By excluding Case 7, $f(v_{21})=0$.
Continue in this way, it has $f(v_i)=1$, $i\equiv4,7,9,10(\bmod\ 10)$ and $f(v_i)=0$, $i\equiv2,3,5,6,8,11(\bmod\ 10)$.
For $n=5q+1$, $2n-2=10q$, $f(2n-2)=1$, then $v_{2n-2}, v_0\in V_1$ and $(v_{2n-2},v_0)\in E$, by Case 7, $r_{f}(V)>0.2$, a contradiction.
For $n=5q+2$, $2n-1=10q+3$, $f(2n-1)=0$, then $N(v_{3})\cap V_0=2$, this case can not meet the definition of IDF. So, $r_{f}(V)\leq0.4$ is not true. The $f$ is shown with Figure \ref{1020}.

\begin{figure}[!ht]
\centering
\includegraphics[scale=1.0]{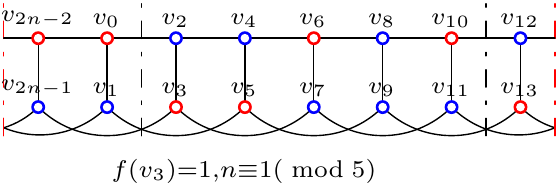}\hspace{5bp}
\includegraphics[scale=1.0]{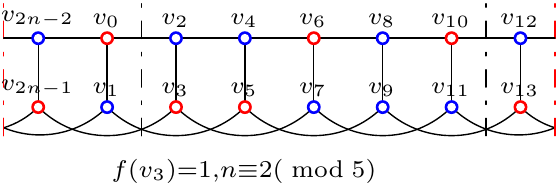}\\
\includegraphics[scale=1.0]{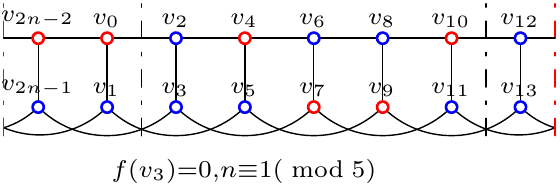}\hspace{5bp}
\includegraphics[scale=1.0]{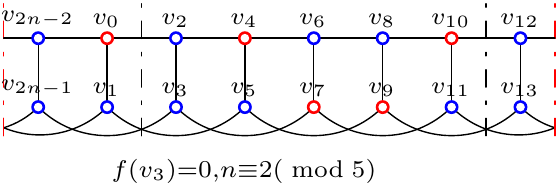}
\caption{$f$ in Case 9.} \label{1020}
\end{figure}

\end{proof}

By Theorem \ref{thm:upperboundpn2} and Theorem \ref{thm:lowerboundpn2}, we have
\begin{Theorem}
Let $G$ be a graph $P(n,2)$,
\begin{equation}
\gamma_{I}(G) =
\begin{cases}
\big\lceil\frac{4n}{5}\big\rceil      & n\equiv 0, 3, 4\ (\bmod\ 5),\\
\big\lceil\frac{4n}{5}\big\rceil+1    & n\equiv 1, 2\ (\bmod\ 5).
\end{cases}\notag
\end{equation}
\label{thm:Italianpn2}
\end{Theorem}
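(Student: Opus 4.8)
The plan is to obtain the exact value by sandwiching $\gamma_{I}(P(n,2))$ between the upper bound of Theorem~\ref{thm:upperboundpn2} and the lower bound of Theorem~\ref{thm:lowerboundpn2}, both of which are already available. Since the two bounds coincide on every residue class modulo $5$, equality follows immediately and no further argument is needed; the entire substance of the result lies in establishing those two bounds, so a proof proposal is really a proposal for proving them.

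For the upper bound I would rely on Lemma~\ref{le2} together with the known values of the $2$-rainbow domination number (Lemma~\ref{lem:tong2rainbowpn2}): this already gives $\gamma_{I}(P(n,2)) \le \lceil 4n/5 \rceil$ for $n \equiv 0,3,4,9 \pmod{10}$ and $\gamma_{I}(P(n,2)) \le \lceil 4n/5 \rceil + 1$ for $n \equiv 1,2,6,7 \pmod{10}$. The only residues where this is not sharp are $n \equiv 5,8 \pmod{10}$, and there I would exhibit the periodic IDF displayed in the proof of Theorem~\ref{thm:upperboundpn2} (a repeated five-column block, with a three-column tail when $n\equiv 8$), verify that it is indeed an IDF, and compute its weight to be $\lceil 4n/5\rceil$.

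For the lower bound the two sub-cases behave very differently. When $n\equiv 0,3,4 \pmod 5$, the generic estimate $\gamma_{I}(G) \ge 2|V(G)|/(\Delta+2)$ of Lemma~\ref{lem:pn2low}, applied with $|V(G)|=2n$ and $\Delta(P(n,2))=3$, already yields $\gamma_{I} \ge \lceil 4n/5 \rceil$, matching the upper bound. When $n \equiv 1,2 \pmod 5$ this bound falls one short, so I would run the discharging argument: define the vertex weights $g_{f}(v)$ as in the proof of Theorem~\ref{thm:lowerboundpn2}, verify the bookkeeping identity $g_{f}(V) = |V_{1}| + 2|V_{2}| = w(f)$ using $3$-regularity, set $r_{f}(V) = g_{f}(V) - 0.8n$, and reduce the strict inequality $\gamma_{I}(P(n,2)) > \lceil 4n/5\rceil$ to $r_{f}(V) > 0.2$ (for $n\equiv 1$) or $r_{f}(V) > 0.4$ (for $n\equiv 2$). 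The crux is then the structural case analysis: record the local charge gains coming from a vertex of value $2$, an edge inside $V_{1}$, an edge between $V_{1}$ and $V_{2}$, or a $0$-vertex with two or three neighbours in $V_{1}$ (the Findings); dispose of the configurations where enough such features are simultaneously present (the easy cases); and in the remaining cases propagate the forced values of $f$ around the two cycles of $P(n,2)$ until the length constraint $2n \equiv 2 \pmod{10}$ produces a contradiction, namely an extra $V_{1}$--$V_{1}$ edge, a $0$-vertex with three $V_{1}$-neighbours, or a violated IDF condition.

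I expect the main obstacle to be precisely this last case analysis: the cases corresponding to $|E_{11}|=1$ and to a labelled even vertex require chasing a periodic pattern of labels with period $10$ all the way around the graph and then using $n \equiv 1,2 \pmod 5$ to close the cycle incompatibly, and one must be careful that the Findings are applied to genuinely distinct vertices and edges so that the charge gains actually add up to more than the required threshold. Once that is in hand, combining Theorem~\ref{thm:upperboundpn2} and Theorem~\ref{thm:lowerboundpn2} gives the stated closed form for $\gamma_{I}(P(n,2))$.
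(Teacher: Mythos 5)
Your proposal is correct and follows the paper's own route exactly: the theorem is obtained by combining the upper bound of Theorem~\ref{thm:upperboundpn2} (via $\gamma_I\le\gamma_{r2}$ plus explicit periodic IDFs for $n\equiv5,8\pmod{10}$) with the lower bound of Theorem~\ref{thm:lowerboundpn2} (the $2|V|/(\Delta+2)$ bound for $n\equiv0,3,4\pmod5$ and the discharging/case analysis for $n\equiv1,2\pmod5$). Your description of where the real work lies, including the propagation-around-the-cycle contradictions, matches the paper's argument.
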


By Theorem \ref{thm:Italianpn2} and Lemma \ref{lem:tong2rainbowpn2}, we can obtain the following relation,
\begin{equation}
\begin{cases}
\gamma_{I}(P(n,2))=\gamma_{r_2}(P(n,2))      & n\equiv 0, 1, 2, 3, 4, 6, 7, 9(\bmod\ 10),\\
\gamma_{I}(P(n,2))=\gamma_{r_2}(P(n,2))-1    & n\equiv 5, 8(\bmod\ 10).
\end{cases}\notag
\end{equation}

\begin{Theorem}\emph{(\cite{Ebrahimi2009dominationpnk})}
If $n\geq5$, we have $\gamma(P(n, 2))=\lceil\frac{3n}{5}\rceil$.
\label{thm:dominationpn2}
\end{Theorem}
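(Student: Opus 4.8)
The theorem is quoted from \cite{Ebrahimi2009dominationpnk}, but here is the route I would take to prove it. Write $V^{i}=\{v_{2i},v_{2i+1}\}$ for the $i$-th column of $P(n,2)$, so that the outer vertex $v_{2i}$ is adjacent to $v_{2i-2},v_{2i+2}$ and to $v_{2i+1}$, while the inner vertex $v_{2i+1}$ is adjacent to $v_{2i-3},v_{2i+5}$ and to $v_{2i}$ (all subscripts mod $2n$). The plan is to prove $\gamma(P(n,2))\le\lceil 3n/5\rceil$ by an explicit construction and $\gamma(P(n,2))\ge\lceil 3n/5\rceil$ by an amortised counting argument over the columns.

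For the upper bound I would use a pattern of period five. Let $D$ consist, for every $j$, of the inner vertices of columns $5j$ and $5j+1$ together with the outer vertex of column $5j+3$; equivalently $D=\{v_{10j+1},v_{10j+3},v_{10j+6}\}$ over all $j$. A direct verification shows that the ten vertices in any five consecutive columns $V^{5j},\dots,V^{5j+4}$ are all dominated: the five outer vertices by the chosen outer vertex of column $5j+3$ and by the spokes at the two chosen inner vertices, and the five inner vertices by the chosen inner vertices of the current and of the next block. Hence when $5\mid n$ this $D$ is a dominating set with $|D|=3n/5$. For $n\equiv r\pmod 5$ with $r\in\{1,2,3,4\}$ one keeps $\lfloor n/5\rfloor$ complete blocks and treats the remaining $r$ columns with a short ad hoc patch using $\lceil 3r/5\rceil$ vertices; since $\lceil 3n/5\rceil=3\lfloor n/5\rfloor+\lceil 3r/5\rceil$, this gives the bound in every residue class. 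This half of the argument is routine.

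For the lower bound, let $D$ be a minimum dominating set and put $d_i=|D\cap V^{i}|\in\{0,1,2\}$; the goal is $\sum_i d_i\ge 3n/5$, equivalently $\sum_i(5d_i-3)\ge 0$. I would first record the constraints forced when $d_i=0$: then $v_{2i}$ must be dominated along the outer cycle, so $v_{2i-2}\in D$ or $v_{2i+2}\in D$, and $v_{2i+1}$ must be dominated along an inner cycle, so $v_{2i-3}\in D$ or $v_{2i+5}\in D$; in particular no three consecutive columns can all have $d_i=0$ (otherwise the outer vertex of the middle column is undominated). I would then discharge: assign column $i$ the charge $5d_i-3$, let each column with $d_i=0$ draw compensation from the columns in $\{i-2,i-1,i+1,i+2\}$ that actually host the $D$-vertices dominating its outer and inner vertex, and verify that every column ends with nonnegative charge. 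Making this precise reduces to a finite case analysis of the possible gap patterns of the cyclic sequence $(d_i)$ between successive positive entries, the key point being that a single positive column is never forced to subsidise more deficient neighbours than its surplus ($+2$ when $d_i=1$, $+7$ when $d_i=2$) permits.

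The main obstacle is exactly this lower-bound bookkeeping. It cannot be reduced to the naive local claim that every five consecutive columns meet $D$ in at least three vertices: the union of all columns $V^{i}$ with $i\equiv 2\pmod 3$ is already a dominating set (albeit far from minimum, of size $2n/3$), and it has windows of five consecutive columns containing only two of its vertices. Thus the count must be amortised over longer stretches — equivalently, one classifies the admissible local blocks of $(d_i)$ by a transfer-matrix/automaton description — in order to pin down the sharp density $3/5$ and the exact value of the ceiling. Combining the two bounds then yields $\gamma(P(n,2))=\lceil 3n/5\rceil$ for all $n\ge 5$.
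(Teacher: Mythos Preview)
The paper does not give its own proof of this statement; it is simply cited from \cite{Ebrahimi2009dominationpnk} and used as a black box, so there is nothing in the paper to compare your argument against. What you have written is therefore a standalone sketch rather than a reconstruction, and should be assessed as such.

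Your upper-bound construction for $5\mid n$ is correct: one checks directly, as you indicate, that $\{v_{10j+1},v_{10j+3},v_{10j+6}:j\}$ dominates every vertex in each block of five columns. The patching claim for $n\not\equiv 0\pmod 5$ is plausible but not actually carried out; in particular, saying that the last $r$ columns can be handled with exactly $\lceil 3r/5\rceil$ extra vertices hides the boundary-compatibility issue (the patch must agree with the periodic pattern on both sides of the seam), and for small $r$ this typically forces a modification of the last full block as well. This is routine but does require explicit case work.

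The lower bound is where the real gap lies. You set up the right framework (charges $5d_i-3$, the observation that no three consecutive $d_i$ vanish) and you correctly demolish the naive five-column window argument with the $i\equiv 2\pmod 3$ example. But you never specify the discharging rules or perform the case analysis; you merely assert that the deficits of the zero columns can be covered by the surpluses of nearby positive columns. That assertion is exactly the content of the theorem, and the inner-cycle step of length~$2$ means a single column with $d_i=1$ can be the unique dominator for vertices two columns away on each side, so the bookkeeping of who subsidises whom is genuinely delicate. As written, the lower-bound portion is an outline of a strategy, not a proof.
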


By Theorem \ref{thm:Italianpn2}-\ref{thm:dominationpn2}, $P(n, 2)$ is not Italian.

\section{The Italian domination number of $P(n,k)$ $(k\ge4)$}
\begin{Theorem}
For $k\ge4$,
\begin{equation}
\begin{cases}
\gamma_{I}(P(n,k))=\frac{4n}{5}     &k\equiv 2,3(\bmod\ 5), n\equiv 0(\bmod\ 5),\\
\frac{4n}{5}\leq\gamma_{I}(P(n,k))\leq \frac{4(n-k)}{5}\frac{(3k+2)}{(3k+1)}+\frac{4k+6}{3}  & otherwise.
\end{cases}\notag
\end{equation}
\label{thm:Italianpnk}
\end{Theorem}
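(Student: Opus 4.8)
The plan is to prove the two inequalities separately: the matching lower bound $\gamma_I(P(n,k))\ge\frac{4n}{5}$ holds for all admissible $k$, and in the special case $k\equiv 2,3\pmod 5$, $n\equiv 0\pmod 5$ the upper bound will drop to $\frac{4n}{5}$ and meet it; in the remaining cases one settles for the stated explicit upper bound.

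\emph{Lower bound.} For every $k$ with $2\le k<n/2$ the graph $P(n,k)$ is connected, $3$-regular and has $2n$ vertices, so Lemma~\ref{lem:pn2low} gives at once
\[
\gamma_I(P(n,k))\ \ge\ \frac{2\cdot 2n}{3+2}\ =\ \frac{4n}{5},
\]
which is the lower bound in both lines of the statement.

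\emph{The exact case $k\equiv 2,3\pmod 5$, $n=5t$.} I would take $f$ to be the labelling obtained by repeating $t$ times the five-column block
\[
\left(\begin{array}{ccccc}0&1&0&0&1\\0&0&1&1&0\end{array}\right),
\]
so that $f(v_{2i})$ and $f(v_{2i+1})$ depend only on $c:=i\bmod 5$; its weight is $4t=\frac{4n}{5}$. To check it is an IDF, note the outer edges join columns $c$ and $c\pm1$, each spoke joins a column to itself, and — since $5\mid n$, so the column is well defined all the way around — the inner edges join the inner vertex of column $c$ to those of columns $c+k$ and $c-k$ modulo $5$. The arithmetic point is that both $k\equiv 2$ and $k\equiv 3\pmod 5$ give the \emph{same} unordered pair $\{c+k,c-k\}\equiv\{c+2,c+3\}\pmod 5$, so the reduced adjacency pattern coincides with the one for $P(n,2)$; a five-line case analysis (one per column) then shows every vertex labelled $0$ has neighbour-weight at least $2$. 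Combined with the lower bound this gives $\gamma_I(P(n,k))=\frac{4n}{5}$.

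\emph{The general upper bound.} In all remaining cases I would build an IDF by cutting the cyclic list of spoke-pairs $V^0,V^1,\dots,V^{n-1}$ into a long arc of about $n-k$ consecutive pairs and a short arc of about $k$ pairs. On the long arc I place a periodic ``bulk'' labelling of density $\frac{4(3k+2)}{5(3k+1)}$ per spoke-pair — slightly above the ideal $\frac45$ — realised for instance with period $5(3k+1)$ and weight $4(3k+2)$ per period: it is the density-$\frac45$ pattern above, modified by a local repair gadget inserted once per period, just often enough to dominate the inner vertices the plain period-$5$ pattern fails to cover once $k\not\equiv 2,3\pmod 5$ or the columns no longer close up. On the short arc I place a denser labelling of total weight $\frac{4k+6}{3}$, chosen so that it also Italian-dominates the two seam zones where the arcs meet. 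Summing, and absorbing into the short arc the minor adjustments needed when $n-k$ is not a multiple of $5(3k+1)$ or $k$ is not a multiple of $3$ (these only shorten arcs, never increasing the total), yields
\[
w(f)\ \le\ \frac{4(n-k)}{5}\cdot\frac{3k+2}{3k+1}\ +\ \frac{4k+6}{3},
\]
as required. The lower bound and the weight bookkeeping are routine; the genuine difficulty is verifying the IDF condition for the bulk labelling, because $k$ need not be compatible with the period, so the two inner neighbours $v_{2(i\pm k)+1}$ of a vertex $v_{2i+1}$ fall in copies of the pattern that are out of phase. The repair gadgets must therefore be positioned so that \emph{every} $0$-labelled inner vertex, wherever its $k$-step neighbours land, still sees neighbour-weight at least $2$, while the seam labelling must work uniformly across all residues of $n$ modulo $5$ and all non-exceptional residues of $k$ modulo $5$. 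Controlling this interplay between the period-$5$ outer structure and the $k$-step inner structure, rather than any single computation, is the crux of the argument.
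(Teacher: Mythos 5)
Your lower bound via Lemma~\ref{lem:pn2low} and your treatment of the exact case $k\equiv 2,3\pmod 5$, $n\equiv 0\pmod 5$ are correct and essentially the paper's argument: the paper uses the same $3$-regularity bound, and its Case~3 block is a cyclic shift of your period-$5$ block. Your observation that when $5\mid n$ the inner adjacencies reduce modulo $5$ to the unordered pair $\{c+2,c+3\}$ for both residues $k\equiv2$ and $k\equiv3$, so the check collapses to the $P(n,2)$ situation, is in fact a cleaner verification of the IDF condition than the paper gives, which merely exhibits the pattern.

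The general upper bound, however, is not proved in your proposal. You describe a bulk labelling of density $\frac{4(3k+2)}{5(3k+1)}$ ``modified by a local repair gadget'' and a seam/tail block of weight $\frac{4k+6}{3}$, but you never write down the gadget, its placement, or the seam labelling, and you yourself flag the verification that every $0$-labelled inner vertex still sees weight at least $2$ as ``the crux of the argument'' --- that crux is exactly what is missing, and it is exactly the content of the paper's proof. The paper splits into cases by $k\bmod 5$ and exhibits, for each case, an explicit periodic block $g$ (period $5k$ columns for $k\equiv0,4\pmod5$, period $3k+1$ for $k\equiv1\pmod5$, period $5$ for $k\equiv2,3\pmod5$) used on the first $n-k$ columns, together with an explicit tail $h$ on the last $k$ columns (three sub-patterns according to $k\bmod 3$) contributing the $\frac{4k+6}{3}$ term; the resulting weights are $\frac{4(n-k)}{5}\frac{4k+1}{4k}+\frac{4k+6}{3}$, $\frac{4(n-k)}{5}\frac{3k+2}{3k+1}+\frac{4k+6}{3}$ and $\frac{4(n-k)}{5}+\frac{4k+6}{3}$, all bounded by the stated expression since $\frac{4k+1}{4k}\le\frac{3k+2}{3k+1}$ for $k\ge1$. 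Until you produce concrete patterns (per residue class of $k$, since a single worst-case bulk pattern will not behave uniformly) and check the IDF condition in the bulk, at the two seams, and inside the tail, your upper bound is a plan rather than a proof.
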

\begin{proof}
By Lemma \ref{lem:pn2low}, $\gamma_{I}(P(n,k))\ge \frac{4n}{5}$. Then we will give the upper bounds by constructing some IDFs.

Case 1. $k\equiv0(\bmod5)$. First, we define a function $g$,
\begin{eqnarray*}
\setlength{\arraycolsep}{0pt}
g=
\left(\begin{array}{lllllll}
1&  0 & 1 & 0 & 0   \\
0&  0 & 0 & 1 & 1   \\
\end{array}\right)^\frac{k}{5}
\left(\begin{array}{lllll}
1  \\
0\\
\end{array}\right)
\left(\begin{array}{lllll}
0 & 1 & 0&  1 & 0\\
1 & 0 & 0&  0 & 1\\
\end{array}\right)^\frac{k}{5}
\left(\begin{array}{lllll}
1 & 0&  0&  1&  0 \\
0 & 1&  1&  0&  0 \\
\end{array}\right)^\frac{k}{5}
\left(\begin{array}{lllllll}
1&  0 & 1 & 0 & 0   \\
0&  0 & 0 & 1 & 1   \\
\end{array}\right)^\frac{k}{5}
\left(\begin{array}{lllll}
1  \\
1 \\
\end{array}\right)
\left(\begin{array}{lllll}
0 & 1 & 0&  1 & 0\\
1 & 0 & 0&  0 & 1\\
\end{array}\right)^\frac{k-5}{5}
\left(\begin{array}{lllll}
0 & 1 & 0  \\
1 & 0 & 0 \\
\end{array}\right).
\end{eqnarray*}

For $n\equiv0(\bmod\ 5k)$, we define $f$ by repeating $g$ $\frac{n}{5k}$ times, i.e. $f=g^{\frac{n}{5k}}$ or $f(v_i)=g(v_{i\bmod10k})$, then $w(f)=(4\times\frac{k}{5}\times 4+4\times\frac{k-5}{5}+5)\times \frac{n}{5k}=\frac{n(4k+1)}{5k}=\frac{4n}{5}\frac{(k+\frac{1}{4})}{k}$.

For $n\not\equiv0(\bmod\ 5k)$,
\begin{equation*}
f(v_{i})=\left\{
\begin{array}{llll}
g(v_{i\bmod10k})  &0\leq i \leq 2n-2k-1,\\
h(v_{i})          &2n-2k\leq i \leq 2n-1.
\end{array}\right.
\end{equation*}

\begin{equation*}
h(v_{ i })=\left\{
\begin{array}{llll}
1      &(i-(2n-2k))\equiv0, 1, 3, 5(\bmod6),\\
0      &(i-(2n-2k))\equiv2, 4(\bmod6).
\end{array}\right.
\end{equation*}

\noindent $h$ also can be expressed as follows.
\begin{equation*}
\setlength{\arraycolsep}{0pt}
h=\left\{
\begin{array}{llll}
\left(\begin{array}{lllll}
1 &0 &  0\\
1 &1 &  1\\
\end{array}\right)^\frac{k}{3}  &\hspace{5bp}k\equiv0(\bmod3),\\
\left(\begin{array}{lllll}
1 &0 &  0\\
1 &1 &  1\\
\end{array}\right)^\frac{k-1}{3}{\left(\begin{array}{lllll}
1  \\
1\\
\end{array}\right)}        &\hspace{5bp}k\equiv1(\bmod3),\\
{\left(\begin{array}{lllll}
1 &0 &  0\\
1 &1 &  1\\
\end{array}\right)}^\frac{k-2}{3}{\left(\begin{array}{lllll}
1 & 0 \\
1 & 1 \\
\end{array}\right)}       &\hspace{5bp}k\equiv2(\bmod3).
\end{array}\right.
\end{equation*}

\noindent The weight of $f$ is $w(f)=\lceil(4\times\frac{k}{5}\times 4+4\times\frac{k-5}{5}+5)\times \frac{n-k}{5k}\rceil+\lceil\frac{k}{3}\times4\rceil=\frac{4(n-k)}{5}\frac{(k+\frac{1}{4})}{k}+\frac{4k+6}{3}$.

Case 2. $k\equiv1(\bmod5)$. Let
\begin{eqnarray*}
\setlength{\arraycolsep}{0pt}
g=
\left(\begin{array}{lllllll}
1&  0 & 1 & 0 & 0   \\
0&  0 & 0 & 1 & 1   \\
\end{array}\right)^\frac{k-1}{5}
\left(\begin{array}{lllll}
1  \\
0\\
\end{array}\right)
\left(\begin{array}{lllll}
0 & 0 & 1&  0 & 1\\
1 & 1 & 0&  0 & 0\\
\end{array}\right)^\frac{k-1}{5}
\left(\begin{array}{lllll}
0 & 0&  1 \\
1 & 1&  0\\
\end{array}\right)
\left(\begin{array}{lllllll}
0&  0 & 1 & 0 & 1   \\
1&  1 & 0 & 0 & 0   \\
\end{array}\right)^\frac{k-1}{5}.
\end{eqnarray*}

For $n\equiv0(\bmod(3k+1))$, define $f=g^{\frac{n}{3k+1}}$ i.e. $f(v_i)=g(v_{i\bmod(6k+2)})$, then $w(f)=(4\times\frac{k-1}{5}\times 3+4)\times \frac{n}{3k+1}=\frac{4n}{5}\frac{(3k+2)}{(3k+1)}$.

For $n\not\equiv0(\bmod(3k+1))$,
\begin{equation*}
f(v_{i})=\left\{
\begin{array}{llll}
g(v_{i\bmod(6k+2)}) &0\leq i \leq 2n-2k-1,\\
h(v_{i})           &2n-2k\leq i \leq 2n-1.
\end{array}\right.
\end{equation*}
\noindent $h$ is defined in Case 1, then $w(f)=\lceil(4\times\frac{k-1}{5}\times3+4)\times\frac{n-k}{3k+1}\rceil +\lceil\frac{k}{3}\times4\rceil=\frac{4(n-k)}{5}\frac{(3k+2)}{(3k+1)}+\frac{4k+6}{3}$.

Case 3. $k\equiv2,3(\bmod5)$. We first define
\begin{eqnarray*}
\setlength{\arraycolsep}{0pt}
g=
\left(\begin{array}{lllllll}
1&  0 & 1 & 0 & 0   \\
0&  0 & 0 & 1 & 1
\end{array}\right).
\end{eqnarray*}

For $n\equiv0(\bmod5)$, let $f=g^{\frac{n}{5}}$ i.e. $f(v_i)=g(v_{i\bmod10})$, then $w(f)=4\times\frac{n}{5}=\frac{4n}{5}$.

For $n\not\equiv0(\bmod5)$,
\begin{equation*}
f(v_{i})=\left\{
\begin{array}{llll}
g(v_{i\bmod10})    &0\leq i \leq 2n-2k-1,\\
h(v_{i})           &2n-2k\leq i \leq 2n-1.
\end{array}\right.
\end{equation*}
\noindent $h$ is defined in Case 1, then $w(f)=\lceil4\times\frac{n-k}{5}\rceil+\lceil\frac{k}{3}\times4\rceil=\frac{4(n-k)}{5}+\frac{4k+6}{3}$.

Case 4. $k\equiv4(\bmod5)$. We define
\begin{eqnarray*}
\small{
\setlength{\arraycolsep}{0pt}
g=
\left(\begin{array}{lllllll}
1&  0 & 1 & 0 & 0   \\
0&  0 & 0 & 1 & 1   \\
\end{array}\right)^\frac{k+1}{5}
\left(\begin{array}{lllll}
1  \\
1\\
\end{array}\right)
\left(\begin{array}{lllll}
0 & 1 & 0&  1 & 0\\
1 & 0 & 0&  0 & 1\\
\end{array}\right)^\frac{k-4}{5}
\left(\begin{array}{lllll}
0 & 1 & 0&  1 \\
1 & 0 & 0&  0 \\
\end{array}\right)
\left(\begin{array}{lllll}
0 & 1&  0&  0&  1  \\
0 & 0&  1&  1&  0 \\
\end{array}\right)^\frac{k-4}{5}
\left(\begin{array}{lllll}
0 & 1&  0  \\
0 & 0&  1 \\
\end{array}\right)
\left(\begin{array}{lllllll}
1 & 0 & 0 & 1 & 0  \\
0 & 1 & 1 & 0 & 0\\
\end{array}\right)^\frac{k-4}{5}
\left(\begin{array}{lllll}
1 & 0 & 0 & 1 \\
0 & 1 & 1 & 0 \\
\end{array}\right)
\left(\begin{array}{lllll}
0 & 1 & 0&  1 & 0\\
1 & 0 & 0&  0 & 1\\
\end{array}\right)^\frac{k-4}{5}
\left(\begin{array}{lllll}
0 & 1 & 0  \\
1 & 0 & 0 \\
\end{array}\right).
}
\end{eqnarray*}

For $n\equiv0(\bmod\ 5k)$, let $f=g^{\frac{n}{5k}}$ i.e. $f(v_i)=g(v_{i\bmod10k})$, then $w(f)=(4\times\frac{k+1}{5}+4\times\frac{k-4}{5}\times 4+13)\times \frac{n}{5k}=\frac{4n}{5}\frac{(k+\frac{1}{4})}{k}$.

For $n\not\equiv0(\bmod\ 5k)$,
\begin{equation*}
f(v_{i})=\left\{
\begin{array}{llll}
g(v_{i\bmod10k}) &0\leq i \leq 2n-2k-1,\\
h(v_{i})         &2n-2k\leq i \leq 2n-1.
\end{array}\right.
\end{equation*}
\noindent $h$ is the same as in Case 1, then $w(f)=\lceil(4\times\frac{k+1}{5}+4\times\frac{k-4}{5}\times 4+13)\times\frac{n-k}{5k}\rceil+\lceil\frac{k}{3}\times 4\rceil=\frac{4(n-k)}{5}\frac{(k+\frac{1}{4})}{k}+\frac{4k+6}{3}.$
\end{proof}

\section{Conclusions}

The purpose of this paper is to study the Italian domination number of generalized Petersen graphs $P(n, k)$, $k\neq3$. We determine the exact values of $\gamma_I(P(n, 1))$, $\gamma_I(P(n, 2))$ and $\gamma_I(P(n, k))$ for $k\ge4$, $k\equiv2,3(\bmod5)$ and $n\equiv0(\bmod5)$. For other $P(n,k)$, we present a bound of $\gamma_I(P(n, k))$. We obtain the relationship between $\gamma_I(P(n,k))$ and $\gamma_{r2}(P(n,k))$ for $k=1,2$.
\begin{equation}
\begin{cases}
\gamma_{I}(P(n,1))=\gamma_{r_2}(P(n,1)) \\
\gamma_{I}(P(n,2))=\gamma_{r_2}(P(n,2))      & n\equiv 0, 1, 2, 3, 4, 6, 7, 9(\bmod\ 10),\\
\gamma_{I}(P(n,2))=\gamma_{r_2}(P(n,2))-1    & n\equiv 5, 8(\bmod\ 10).
\end{cases}\notag
\end{equation}

Moreover, our results imply $P(n,1)$ $(n\equiv0(\bmod\ 4))$ is Italian, $P(n,1)$ $(n\not\equiv0(\bmod\ 4))$ and $P(n,2)$ are not Italian.


\begin{thebibliography}{99}
\bibitem{Cockayne2004Roman}
  {\sc  Ernie J. Cockayne, Paul A. Dreyer Jr., Sandra M. Hedetniemi, Stephen T. Hedetniemi},
  Roman domination in graphs.
  {\sl Discrete Mathematics} Vol. 278 pp. 11--22 (2004)
\bibitem{Bresar2008rainbow}
  {\sc  Bo\v{s}tjan Bre\v{s}ar, Michael A. Henning, Douglas F. Rall},
  Rainbow domination in graphs.
  {\sl Taiwanese Journal of Mathematics} Vol. 12 pp. 213--225 (2008)
\bibitem{Chellali2016roman-2}
  {\sc  Mustapha Chellali, Teresa W. Haynes, Stephen T. Hedetniemi, Alice A. McRaee},
  Roman \{2\}-domination.
  {\sl Discrete Applied  Mathematics} Vol. 204 pp. 22--28 (2016)
\bibitem{Henning2017Italiantree}
  {\sc  Michael A. Henning, William F. Klostermeyer},
  Italian domination in trees.
  {\sl Discrete Applied  Mathematics} Vol. 217 pp. 557--564 (2017)
\bibitem{Bresar20072rainbowdomination}
  {\sc  Bo\v{s}tjan Bre\v{s}ar, Tadeja Kraner \v{S}umenjak},
  On the 2-rainbow domination in graphs.
  {\sl Discrete Applied  Mathematics} Vol. 155 pp. 2394--2400 (2007)
\bibitem{Stepien20152rainbowcnc5}
  {\sc  Zofia St\c{e}pie\'{n}, Alicja Szymaszkiewicz, Lucjan Szymaszkiewicz, Maciej Zwierzchowski},
  2-Rainbow domination number of $C_n\Box C_5$.
  {\sl Discrete Applied  Mathematics} Vol. 170 pp. 113--116 (2014)
\bibitem{Li2018weak-2}
  {\sc  Zepeng Li, Zehui Shao, Jin Xu},
  Weak \{2\}-domination number of Cartesian products of cycles.
  {\sl Journal of Combinatorial Optimization} Vol. 35 pp. 75--85 (2018)
\bibitem{Gao2019ItalianCnPm}
  {\sc  Hong Gao, Tingting Xu, Yuansheng Yang},
  Bagging approach for Italian domination in $C_n\Box P_m$.
  {\sl IEEE Access} Vol. 7 pp. 105224--105234 (2019)
\bibitem{Gao2019pn3}
  {\sc  Hong Gao, Changqing Xi, Kun Li, Qingfang Zhang, Yuansheng Yang},
  The Italian domination numbers of generalized Petersen graphs $P(n,3)$.
  {\sl Mathematics} Vol. 7 pp. 714 (2019)
\bibitem{Hao2018globalItalian}
  {\sc  Guoliang Hao, Kangxiu Hu, Shouliu Wei, Zhijun Xu},
  Global Italian domination in graphs.
  {\sl Quaestiones Mathematicae} Vol. 41 pp. 1--15 (2018)
\bibitem{Rahmouni}
  {\sc  Abdelkader Rahmouni, Mustapha Chellali},
  Independent Roman $\{2\}$-domiantion in graphs.
  {\sl Discrete Applied  Mathematics} Vol. 236 pp. 408--414 (2018)
\bibitem{FanW}
  {\sc  Wenjie Fan, Ansheng Ye, Fang Miao, Zehui Shao, Vladimir Samodivkin, Seyed Mahmoud Sheikholeslami},
  Outer-independent Italian domination in graphs.
  {\sl IEEE Access} Vol. 7 pp. 22756--22762 (2019)
\bibitem{Haynes2019PerfectItalian}
  {\sc  Teresa W. Haynes, Michael A. Henning},
  Perfect Italian domination in trees.
  {\sl Discrete Applied  Mathematics} Vol. 260 pp. 164--177 (2019)
\bibitem{Stepien20152rainbowcncm}
  {\sc  Zofia St\c{e}pie\'{n}, Lucjan Szymaszkiewicz, Maciej Zwierzchowski},
  The Cartesian product of cycles with small 2-rainbow domination number.
  {\sl Journal of Combinatorial Optimization} Vol. 30 pp. 668--674 (2015)
\bibitem{tong2009pn2}
  {\sc  Chunling Tong, Xiaohui Lin, Yuansheng Yang, Meiqin Luo},
  2-rainbow domination of generalized Petersen graphs $P(n,2)$.
  {\sl Discrete Applied  Mathematics} Vol. 157 pp. 1932--1937 (2009)
\bibitem{shao20192rainbowpnk}
  {\sc  Zehui Shao, Huiqin Jiang, Pu Wu, Shaohui Wang, Janez \v{Z}erovnik, Xiaosong Zhang, Jia-Bao Liu},
  On 2-rainbow domination of generalized Petersen graphs.
  {\sl Discrete Applied  Mathematics} Vol. 257 pp. 370--384 (2019)
\bibitem{Ebrahimi2009dominationpnk}
  {\sc  B. Javad Ebrahimi, Nafiseh Jahanbakht, E.S. Mahmoodian},
  Vertex domination of generalized Petersen graphs.
  {\sl Discrete Mathematics} Vol. 309 pp. 4355--4361 (2009)
\end{thebibliography}
\end{document}